\newtheorem{theorem}{Theorem}
\newtheorem{lemma}[theorem]{Lemma}
\newtheorem{proposition}[theorem]{Proposition}
\newenvironment{proof}[1][Proof]{\noindent\textbf{#1.} }{\ \rule{0.5em}{0.5em}}
\def \R{\mathbb{R} }
\def \Z{\mathbb{Z} }
\def \Q{\mathbb{Q}}
\def \H{\mathbf{H} }
\def \charone{{\pmb 1}}
\def \w{\mathbf{w} }
\def \x{\mathbf{x} }
\def \y{\mathbf{y} }
\def \z{\mathbf{z} }
\def \X{\mathbf{X} }
\def \Y{\mathbf{Y} }
\def \V{\mathbf{V} }
\def \P{\mathbf{P} }
\def \Q{\mathbf{Q} }
\def \U{\mathbf{U} }
\def \SL{\operatorname{SL}}
\def \PSL{\operatorname{PSL}}
\def \one{1\hskip -4 pt 1}
\def \im{\operatorname{Im}}
\begin{document}

\title{Equidistribution of Horocyclic Flows on Complete Hyperbolic Surfaces of Finite Area}
\author{John H. Hubbard and Robyn L. Miller\\Cornell University}
\maketitle

\renewcommand{\today}{September 3, 2007}

\vspace{0.5mm}

\begin{abstract}
We provide a self-contained, accessible introduction to Ratner's Equidistribution Theorem in the special case of horocyclic flow on a complete hyperbolic surface of finite area.  This equidistribution result was first obtained in the early 1980s by Dani and Smillie \cite{DaniSmillie84} and later reappeared as an illustrative special case \cite{Ratner92} of Ratner's work [Rat91-Rat94] on the equidistribution of unipotent flows in homogeneous spaces.  We also prove an interesting probabilistic result due to Breuillard: on the modular surface an arbitrary uncentered random walk on the horocycle through almost any point will fail to equidistribute, even though the horocycles are themselves equidistributed \cite{Breuillard05}.  In many aspects of this exposition we are indebted to Bekka and Mayer's more ambitious survey \cite{Bekka00}, \textit{Ergodic Theory and Topological Dynamics for Group Actions on Homogeneous Spaces}.
\end{abstract}

\vspace{2mm}

\section{Horocycle flow on hyperbolic surfaces}

\begin{figure}[ptb]
\begin{centering}
\includegraphics[scale=0.85]{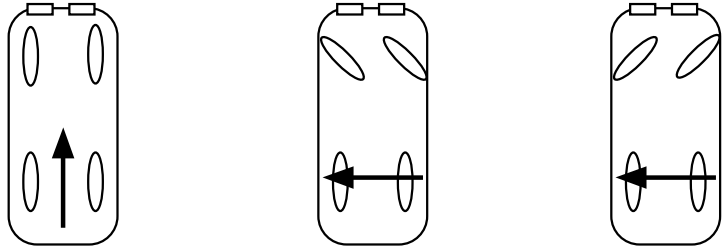}
\caption{\label{carfig} Driving the cars above leads to geodesic flow, positive horocycle flow and
negative horocycle flow
respectively}
\end{centering}
\end{figure}

Let $X$ be a complete hyperbolic surface, perhaps the hyperbolic plane
$H$, and let $\X $ denote the unit tangent bundle $T^{1}(X)$
to $X$ (and $\H=T^1H$). There are three flows on $\X $ which will concern us here. They
are realized by three cars, as represented in Figure \ref{carfig}.

The cars all have their steering wheels locked in position: the first
car drives straight ahead, the second one steers to the left so as to follow a
path of geodesic curvature 1, and the third steers to the right, also
following a path of geodesic curvature 1. All three cars have an arrow painted
on the roof, centered at the rear axle; for the first the arrow points
straight ahead, and for the other two it points sideways -- in the direction
towards which the car is steering for the second car and in the opposite direction for the
third.

The flows at time $t \in\R$ starting at
a point $\x=(x,\xi) \in\X $ are defined as follows:\newline

\begin{enumerate}
\item \textit{The geodesic flow}: put the first car on $X$ with the arrow
pointing in the direction of $\xi$, and drive a distance $t$. The point of
arrival, with the arrow on the car at that point, will be denoted
$\x g(t)$;

\item \textit{The positive horocyclic flow}: put the second car on $X$ with the
arrow pointing in the direction of $\xi$, and drive a distance $t$. The point
of arrival, with the arrow on the car at that point, will be denoted
$\x  u_{+}(t)$;

\item \textit{The negative horocyclic flow}: put the third car on $X$ with the
arrow pointing in the direction of $\xi$, and drive a distance $t$. The point
of arrival, with the arrow on the car at that point, will be denoted
$\x u_{-}(t)$.
\end{enumerate}

We will see when we translate to matrices why it is convenient to
write the flows as \textit{right} actions. \newline\indent The trajectories
followed by these cars are represented in Figure \ref{drivinginplane}.

\begin{figure}[ptb]
\begin{centering}
\includegraphics[scale=0.65]{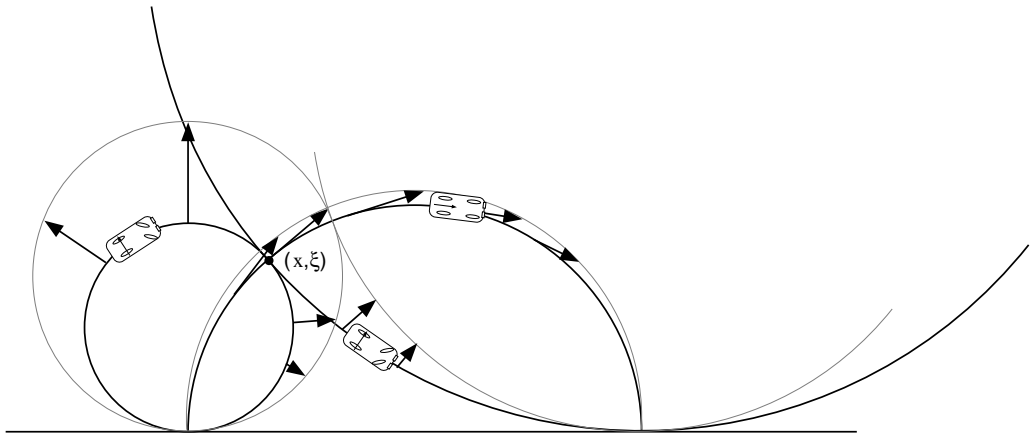}
\caption{\label{drivinginplane} In the upper half-plane model of the hyperbolic plane, the geodesic
passing through $(x,\xi)$ is the semicircle perpendicular to the real axis and tangent at $x$
to $\xi$.  One should remember that it is not a curve in  $\mathbb{H}$, but rather
a curve in $T^1(\mathbb{H})$ and carries its velocity vector with it.  From the
point $(x,\xi)$, the positive horocycle is the circle tangent to the real axis
at the endpoint of the geodesic above and perpendicular to $\xi$ at $x$, whereas
the negative horocycle flow is the circle tangent to the real axis at the origin of the
geodesic, and still perpendicular to $\xi$ at $x$.  We have drawn our
tinkertoys driving along them.}
\end{centering}
\end{figure}

\section{Translation to Matrices}

In less picturesque language (more formal, not more accurate), you can
identify $\X $ with $\Gamma\setminus \PSL_{2}\R$ for some
Fuchsian group $\Gamma$.

\begin{enumerate}
\item The geodesic flow of the point represented by $g\in \PSL_{2}\R$ is
\[
t\mapsto g\left(
\begin{array}
[c]{cc}%
e^{\frac{t}{2}} & 0\\
0 & e^{-\frac{t}{2}}%
\end{array}
\right)  ;
\]

\item The positive horocyclic flow of the point represented by $g\in \PSL_{2}\R$ is
\[
t\mapsto g\left(
\begin{array}
[c]{cc}%
1 & t\\
0 & 1
\end{array}
\right)  ;
\]

\item The negative horocyclic flow of the point represented by $g\in \PSL_{2}\R$ is
\[
t\mapsto g \left(
\begin{array}
[c]{cc}%
1 & 0\\
t & 1
\end{array}
\right)  ;
\]
\end{enumerate}

\indent The standard left action of $\PSL_{2}\R$ on $H$,
which lifts by the derivative to a left action on $\H$ is given by
\begin{equation}
\bmatrix a&b\\c&d\endbmatrix  \cdot z = \frac{az+b}{cz+d} \quad \text{lifting to} \
\bmatrix a&b\\c&d\endbmatrix \cdot(z,\xi) = \left(  \frac{az+b}{cz+d},\frac{\xi}{(cz+d)^{2}}\right)
\end{equation}

\noindent We can then identify $\PSL_2\R$ to $\H$ by
choosing $\mathbf{x_{0}}=(i,i) \in \H$ and setting $\Phi: \PSL_{2}\R \rightarrow \H$ to be
\[
\Phi\left(
\begin{array}
[c]{cc}%
a & b\\
c & d\\
\end{array}
\right)  := \left(
\begin{array}
[c]{cc}%
a & b\\
c & d\\
\end{array}
\right)  \cdot\mathbf{x_{0}} = \left(  \frac{ai+b}{ci+d},\frac{i}{(ci+d)^{2}%
}\right)
\]
\noindent Since
\[
\Phi(\gamma A)=(\gamma A) \cdot\mathbf{x_{0}} = \gamma\cdot(A \cdot
\mathbf{x_{0}})  = \gamma\cdot\Phi(A)
\]
\noindent we see that $\Phi$ induces a diffeomorphism $\Phi_{\Gamma} :
\Gamma\setminus \PSL_{2}\R \rightarrow\Gamma \setminus \X $.

The left action above does \textit{not} induce an action of
$\PSL_2 \R$ on $\Gamma \setminus\X $, but there is an action on the right
given by
\[
\Phi(A)*B= \Phi(AB)
\]
\noindent For $\gamma\in\Gamma$ we have
\[
\Phi_{\Gamma} (A)*B= \Phi_{\Gamma} (AB) = \Phi_{\Gamma} (\gamma AB) =
\gamma\cdot\Phi_{ \Gamma}(AB) =\gamma \cdot \left(\Phi_{\Gamma}(A)*B\right)
\]

\noindent so the action is well defined on $\X $. All three flows are
special cases, eg. write
\[
G^{t} = \left(
\begin{array}
[c]{cc}%
e^{\frac{t}{2}} & 0\\
0 & e^{-\frac t2}\\
\end{array}
\right)  , \quad U_+^t = \left(
\begin{array}
[c]{cc}%
1 & t\\
0 & 1\\
\end{array}
\right) , \quad  U_{-}^{t} = \left(
\begin{array}
[c]{cc}%
1 & 0\\
t & 1\\
\end{array}
\right)
\]
and name the corresponding one-parameter subgroups
\[
G=\{G^{t} |\ t \in\R\},\quad U_+=\{U_+^{t} |\ t \in\R\},
\quad U_-=\{U_-^t |\ t \in\R\}.
\]
Then
\[
\x g(t)=\x*G^{t}, \quad \x u_{+}(t)=\x*U_{+}^{t}, \quad \x u_{-}(t)=\x*U_{-}^{t}
\]

Let us check these. By naturality we see that for all $A \in \PSL_{2}\R$ we have
\[
(A \cdot\mathbf{x_{0}}) g(t) = A \cdot(\mathbf{x_{0}} g(t)),\ (A
\cdot\mathbf{x_{0}}) u_{+}(t) = A \cdot(\mathbf{x_{0}} u_{+}(t)),\ (A
\cdot\mathbf{x_{0}}) u_{-}(t) = A \cdot(\mathbf{x_{0}} u_{-}(t))
\]
and, moreover
\[
\Phi(G^{t}) = \x_0 g(t), \;\;\;\; \Phi(U_+^t) = \x_0
u_+(t), \quad \Phi(U_-^t)=\x_0 u_{-}(t)
\]
\noindent so
\[
\Phi(A G^{t}) = (A G^{t}) \cdot\mathbf{x_{0}} = A \cdot(G^{t} \cdot
\mathbf{x_{0}}) = A \cdot(\mathbf{x_{0}} g(t)) = (A \cdot\mathbf{x_{0}}) g(t)
= \Phi(A) g(t)
\]
\noindent and the argument for $u_{+}$ and $u_-$ is identical.\\

Left multiplication by $G^{t},\ U_+^{t}$ and $U_-(t)$ also give flows on $\H$; probably easier to understand than the geodesic and horocycle flows. For instance, left action by $U_+^{t}$ corresponds to translating a point and vector to the right by $t$. But these actions do not commute with the action of $\Gamma$ and hence induce nothing on $\X $.

Since $\PSL_2\R$ is unimodular, it has a Haar measure, invariant under both left and right translation, and unique up to
multiples.  Since $\PSL_2\R$ is not compact, there is no natural normalization.  Denote by $\omega$ the corresponding measure on
$\H$; if $\X=\Gamma\backslash \H$ is of finite volume, we will denote by $\omega_\X$ the corresponding measure normalized so that
$\omega_\X(\X)=1$. Up to a constant multiple we have
$$
\omega=\frac {dx\wedge dy\wedge d\theta}{y^2},
$$
where we have written $\x=(z,\xi)$ and $z=x+iy$, $\xi= ye^{i\theta}$ (the factor $y$ is there to make it a unit vector): this
measure is easily confirmed to be invariant under both left and right action of $\PSL_2\R$ on $\H$.

Occasionally, we will need a metric and not just a measure on $\PSL_2\R$; we will use the metric that corresponds under
$\Phi$ to the Riemannian structure
$$
\frac {dx^2+dy^2}{y^2} +d\theta^2.
$$
This metric is invariant under left action of $\PSL_2\R$ on $\H$, and as such does induce a metric on $\X$.  It is {\it not
invariant} under right action, and the flows $u_+, u_-$ and $g$ do not preserve lengths.

\section{The Horocycle Flow is Ergodic}

\begin{theorem}\cite{Hedlund36}
\label{Hedlund} The positive and the negative horocycle flows  are ergodic.
\end{theorem}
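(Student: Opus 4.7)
The strategy is to reformulate ergodicity of $u_+$ as: every $f\in L^2(\X,\omega_\X)$ with $\pi(U_+^s)f = f$ for all $s\in\R$ is a.e.\ constant (where $\pi$ denotes the right regular representation of $\PSL_2\R$), and then prove this by leveraging the conjugation identity
\[
G^{-t}\,U_+^{s}\,G^{t} \;=\; U_+^{s e^{-t}}
\]
together with its $U_-$ analogue $G^{t}U_-^{s}G^{-t} = U_-^{s e^{-t}}$, both of which say the geodesic subgroup contracts the corresponding horocycle subgroup toward the identity. Since $U_+$ and $U_-$ together generate $\PSL_2\R$, and this group acts transitively on $\Gamma\backslash\PSL_2\R$ by right multiplication, ergodicity will follow once the contraction is converted into a Mautner-type bootstrap: $U_+$-invariance of $f$ implies $G$-invariance, hence $U_-$-invariance, hence $\PSL_2\R$-invariance, hence $f$ is a.e.\ constant.

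Concretely, I would decompose $L^2(\X) = \C\charone \oplus L^2_0(\X)$ and argue by contradiction that the subrepresentation of $\pi$ on $L^2_0$ admits no nonzero $U_+$-invariant vector. For any candidate $f$, the matrix coefficient $\phi(g) = \langle\pi(g)f,f\rangle$ is continuous and bi-$U_+$-invariant, and $\phi(U_+^s) = \|f\|^2$ identically in $s$. A useful double-coset identity is
\[
w_c \;:=\; \begin{pmatrix}0 & -1/c \\ c & 0\end{pmatrix} \;=\; U_+^{-1/c}\,U_-^{c}\,U_+^{-1/c},
\]
which by bi-$U_+$-invariance gives $\phi(w_c) = \phi(U_-^c) \to \|f\|^2$ as $c\to 0^+$, even though $w_c$ escapes every compact subset of $\PSL_2\R$. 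This failure of $\phi$ to vanish at infinity is the crux and, in combination with a Howe--Moore-type decay of matrix coefficients on $L^2_0$ (which must be established as an auxiliary input, either from the classification of irreducible unitary representations of $\PSL_2\R$ or by a direct functional-analytic argument in the spirit of the $w_c$ identity), forces $f = 0$. Ergodicity of $u_+$ follows; the argument for $u_-$ is identical after exchanging $U_+ \leftrightarrow U_-$ and $G^t \leftrightarrow G^{-t}$.

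\textbf{Main obstacle.} The hard part is converting the algebraic contraction $G^{-t}U_+^s G^t \to e$ into a genuine Hilbert-space statement. The naive manipulation
\[
\pi(U_+^s)\pi(G^t)f \;=\; \pi(G^t U_+^{s e^{-t}})f \;=\; \pi(G^t)f
\]
only reproduces the tautology that $\pi(G^t)f$ is again $U_+$-invariant, yielding no new invariance and in particular not $G$-invariance of $f$ itself. Extracting the latter requires the decay of matrix coefficients along noncompact subgroups of $\PSL_2\R$; establishing this decay --- equivalently, proving a Mautner lemma that upgrades $U_+$-fixed vectors to $\PSL_2\R$-fixed vectors in an arbitrary unitary representation --- is where the substantive work of the proof resides.
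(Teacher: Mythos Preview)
Your setup coincides with the paper's: for a $U_+$-invariant $f\in L^2(\X)$ you form the matrix coefficient $F_f(A)=\langle f, T_A f\rangle$, note it is continuous and bi-$U_+$-invariant, and aim to show it is identically $\|f\|^2$, whence Cauchy--Schwarz and connectedness give $T_Af=f$ for all $A$. Where you diverge is at the decisive step. You correctly isolate the difficulty---bi-$U_+$-invariance plus continuity alone cannot force constancy, since the biorbits are closed---and then defer to Howe--Moore or a Mautner lemma as an ``auxiliary input,'' explicitly conceding that ``establishing this decay \dots\ is where the substantive work of the proof resides.'' That is the gap: you have reduced the theorem to a statement of comparable depth without proving it, and the contraction identity $G^{-t}U_+^sG^t=U_+^{se^{-t}}$ you record, as you yourself note, only reproduces the tautology that $\pi(G^t)f$ is again $U_+$-invariant.

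The paper fills this gap by an elementary, self-contained argument that uses no representation theory and no decay of matrix coefficients. The extra ingredient is that $F_f$ is not merely continuous but \emph{uniformly} continuous on $\PSL_2\R$ (approximate $f$ in $L^2$ by $g\in C_c(\X)$; such $g$ are uniformly continuous, so $A\mapsto T_Ag$ and hence $A\mapsto T_Af$ is uniformly continuous). One then analyzes the $U_+$-biorbits geometrically under the identification $\Phi:\PSL_2\R\to\H$: the exceptional biorbits are the horizontal lines of upward-pointing vertical unit vectors (their union is a plane $\V$), and the generic biorbits are the folded planes of vectors defining horocycles of a fixed Euclidean radius tangent to the real axis. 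The two geometric facts one checks by hand are that (i) any two points of $\V$ lie within $\delta$ of a single generic biorbit (tilt one vertical vector slightly, flow along its horocycle down to the other height, then translate horizontally within the biorbit), and (ii) every generic biorbit contains vectors arbitrarily close to $\V$ (take the point on the horocycle nearest the real axis). Uniform continuity together with (i) and (ii) forces $F_f$ to be constant on all of $\PSL_2\R$, equal to $F_f(I)=\|f\|^2$. Your $w_c$ computation is in this spirit---it produces points escaping to infinity where $F_f$ stays near $\|f\|^2$---but to conclude without Howe--Moore you need the uniform-continuity-plus-biorbit-geometry argument rather than a contradiction with vanishing at infinity.
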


We will show this for the positive ergodic flow. To prove Theorem 1 we will show that any $f \in L^2(\X )$
invariant under the horocycle flow is constant almost everywhere. Indeed, if
the positive horocycle flow is not ergodic then then there is a measurable set
$\Y \in\X $ of positive but not full measure that is invariant
under $U_{+}$ and the characteristic function $\charone_{\Y}$ provides
a nonconstant invariant element of $L^2(\X)$.

\begin{lemma} \label{biinvariantlemma}
For $f \in L^{2}(\X )$, $A \in \PSL_{2}\R$ and $\x
\in\X $, let $(T_{A} f)(\x) := f(\x * A)$. Then the
function $F_f: \PSL_{2}\R \rightarrow\mathbb{R}$ defined by
\[
F_f(A)=\int_{\X } f(\x)f(\x*A) \omega_\X(d \x) := < f,T_{A}f>
\]
is

(a) uniformly continuous and

(b) bi-invariant under $U_{+}$, i.e., invariant under the left and the right action of $U_+$ on $\PSL_2\R$.
\end{lemma}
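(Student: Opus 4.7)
The plan is to treat parts (a) and (b) separately, using in both cases only two features of the right action: that each operator $T_A$ is unitary on $L^2(\X)$ (because the right action preserves $\omega_\X$) and that $T_A T_B = T_{AB}$, together with the hypothesis (inherited from the set-up for Theorem \ref{Hedlund}) that $f$ is $U_+$-invariant, i.e.\ $T_{U_+^t} f = f$ for all $t$.

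For (a), Cauchy--Schwarz and unitarity of $T_B$ give
\[
|F_f(A) - F_f(B)| = |\langle f,\, T_A f - T_B f\rangle| \leq \|f\|_2\,\|T_A f - T_B f\|_2 = \|f\|_2\,\|T_{B^{-1}A} f - f\|_2,
\]
where the last equality uses $T_A = T_B T_{B^{-1}A}$ and that $T_B$ preserves the $L^2$-norm. Because the Riemannian metric on $\PSL_2\R$ introduced in the previous section is left-invariant, $d(A,B) = d(e, B^{-1}A)$, so uniform continuity of $F_f$ reduces to continuity of the orbit map $C\mapsto T_C f$ at the identity. For this I would first verify the statement for $g\in C_c(\X)$, where uniform continuity of $g$ on a compact neighborhood of its support gives $\|T_C g - g\|_2 \to 0$ as $C\to e$; then I would extend to arbitrary $f\in L^2(\X)$ by a standard $3\varepsilon$ argument using density of $C_c(\X)$ in $L^2(\X)$ (valid since $\X$ is $\sigma$-compact, even when it has cusps).

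For (b), both invariances amount to sliding the $U_+^t$-factor onto $f$. Right-invariance is immediate:
\[
F_f(A U_+^t) = \langle f,\, T_A T_{U_+^t} f\rangle = \langle f,\, T_A f\rangle = F_f(A).
\]
Left-invariance uses in addition the adjoint identity $T_{U_+^t}^* = T_{U_+^{-t}}$, which is exactly unitarity:
\[
F_f(U_+^t A) = \langle f,\, T_{U_+^t} T_A f\rangle = \langle T_{U_+^{-t}} f,\, T_A f\rangle = \langle f,\, T_A f\rangle = F_f(A).
\]

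The only real obstacle is the strong-continuity step in (a); the rest is algebra in the inner product. Once one knows that the right $\PSL_2\R$-action on $\X$ is continuous and that $C_c(\X)$ is dense in $L^2(\X)$, the convergence $T_C g\to g$ in $L^2$ as $C\to e$ for $g\in C_c(\X)$ is a routine dominated-convergence argument, and the approximation lemma delivers the full statement.
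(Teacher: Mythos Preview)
Your proof is correct and follows essentially the same approach as the paper's: density of $C_c(\X)$ plus a $3\varepsilon$ argument for (a), and the $U_+$-invariance of $f$ together with measure-preservation for (b). Your presentation is somewhat cleaner---reducing (a) to strong continuity of $C\mapsto T_Cf$ at the identity via left-invariance of the metric and unitarity of $T_B$, and phrasing (b) in terms of adjoints rather than explicit changes of variable in the integrals---but the underlying ideas are identical.
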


\textbf{Proof of Lemma \ref{biinvariantlemma}} (a) Choose $\varepsilon> 0$. Since the continuous functions with
compact support are dense in $L^{2}(\X )$, we can find a function $g
\in C_{c}(\X )$ with $||f-g||_{2} < \varepsilon/3$. The fact
that such a $g$ is uniformly continuous means that $\exists\delta>0$ such
that
\[
d(A,B)< \delta\Rightarrow||T_{A}g-T_{B}g||_{2} < \frac{\varepsilon}{3}
\]
So when $d(A,B) \leq\delta$ we have
\[
\|T_{A} f - T_{B} f\|_{2} \leq\|T_{A} f - T_{A} g\|_{2} + \|T_{A} g - T_{B}
g|_{2} + \|T_{B} g - T_{B} f\|_{2} \leq\varepsilon
\]
We see that $A \mapsto T_{A}f$ is a uniformly continuous map
$\PSL_{2}\R \rightarrow L^{2}(\X )$ and (a) follows.

\medskip
(b) Biinvariance reflects the invariance of Haar
measure on $\PSL_2\R$ under left and right translation: for $A \in \PSL_{2}\R$ we have
\begin{align*}
F_f(A U_{+}^{t})&=\int_{\X } f(\x) f(\x*(AU_+^t)) \omega_\X(d \x) =
\int_{\X } f(\x) f((\x*A) u_{+}(t))\omega_\X(d \x)\\
& = \int_{\X } f(\x) f(\x *A) \omega_\X(d \x) = F_f(A);\\
F_f(U_{+}^{t} A)&=\int_{\X } f(\x) f(\x *(U_{+}^{t}A )) \omega_\X(d \x)  = \int_{\X } f(\x *A^{-1}) f(\x *U_{+}^{t}) \omega_\X(d \x)   \\
&= \int_{\X } f(\x *A^{-1}) f(\x  u_{+}(t) \omega_\X(d \x) =
\int_{\X } f(\x *A^{-1}) f(\x ) \omega_\X(d \x) \\
&=
\int_{\X } f(\x ) f(\x  *A) \omega_\X(d \x) =F_f(A)
\end{align*}

\begin{figure}[!ht]
\begin{centering}
\includegraphics[scale=0.40]{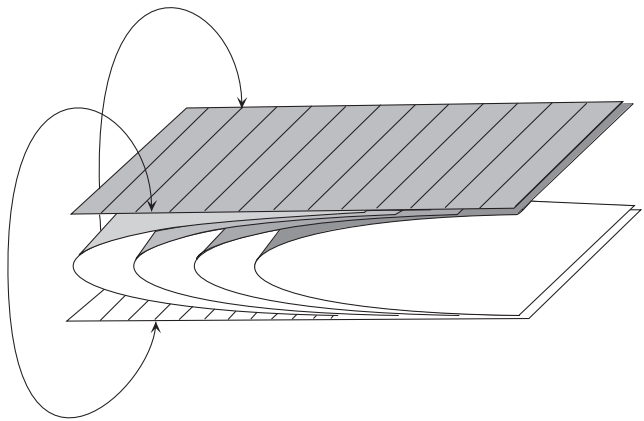}
\caption {\label{biorbits1} Since the left and the right action of $U_+$ commute, the biorbits are homeomorphic to $\R^2$, except
the orbits on which the two actions coincide. Viewed in $\H$, the orbits are the 1-parameter family of folded planes (the
ham slices in the sandwich). The top and the bottom planes should be identified; they represent the orbits formed of vertical
upwards pointing vectors; those orbits are lines, as drawn in the planes.  The salient features of the figure is that any two points
of the top plane are within $\epsilon$ of a single biorbit (in fact all of those with folds sufficiently far to the left),  and
every biorbit comes arbitrarily close to a biorbit consisting of vertical upwards pointing vectors.  }
\end{centering}
\end{figure}

\begin{proof} [Proof of Theorem 1] What do the biorbits of $U_+$ look like in $\PSL_2\R$? Using our identification $\Phi: \PSL_2\R\to \H$, we can think of the biorbits  as living in $\H$ with geometry  represented in figure \ref{biorbits1}.  More
specifically, there are two kinds of biorbits.  The first (exceptional) kind of biorbit consists of all upwards pointing vertical vectors anchored at points $z=x+iy$ with a given $y$-coordinate. The union of these orbits forms a plane $\V\subset \H$.  The other (generic) biorbits consist of the vectors defining horocycles of a given radius tangent to the $x$-axis: each such biorbit is
diffeomorphic to a plane.

In particular, all the biorbits are closed, and there is nothing to prevent the existence of nonconstant continuous functions on
$\H$ that are constant on biorbits. But our function $F_f$ is {\it uniformly} continuous, and that changes the situation: every
uniformly continuous function on $\H$ that is constant on biorbits {\it is} constant. What we need to see is that for every
$\epsilon>0$,\\

\noindent$\bullet$    any two elements of $\V$ can be approximated to within $\epsilon$ by a single 2-dimensional biorbit, and\\

\noindent$\bullet$  that any 2-dimensional biorbit is within $\epsilon$ of some element of $\V$. \\

\noindent These features are illustrated, but not proved, by figure \ref{biorbits1}.  The proofs are the content of the two parts of figure \ref{almostverticalfig}:\\

\begin{figure}[!ht]
\begin{centering}
\includegraphics[scale=0.50]{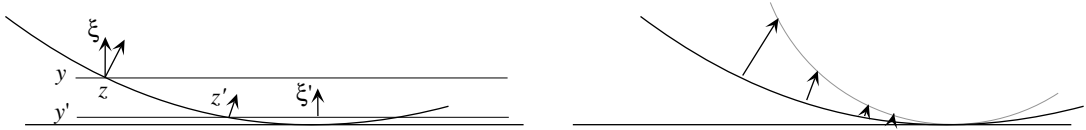}
\caption {\label{almostverticalfig} Left: Any two upward-pointing vertical unit vectors $\xi, \xi'$ can be approximated by
elements of the same biorbit. Right: Any orbit contains vectors arbitrarily close to upward-pointing vertical vectors.}
\end{centering}
\end{figure}

 Since $F_f$ is uniformly continuous, for any $\epsilon >0$ we can find a $\delta$ such that $d(\z,\z')<\delta \Rightarrow \|F_f(\z)-F_f(\z')\| < \epsilon/2$.  Choose any two points $\z=(x+iy, \xi), \z'=(x'+iy', \xi') \in \V$, and assume without loss of generality that $y>y'$.  Choose $\eta$ a \textit{nonvertical} vector for which $d((x+iy, \xi),(x+iy,\eta)) < \delta$.  Set $\w''=(x''+iy',\eta')$ is the point on the the positive horocycle through $\w=(x+iy,\eta)$ at ``height'' $y'$, and $w'=(x'+iy',\eta')$.  The vector $\eta'$ is \textit{more} vertical than $\eta$, so $d((x''+iy',\eta'),(x''+iy',\xi')) <\delta$.  Further, $w'$ and $w'''$ belong to the same  biorbit.  Thus
  \begin{align*}
 &\|F_f(\z)-F_f(\z')\| \\
 & \leq \|F_f(\z)-F_f(\w)\|+\|F_f(\w)-F_f(\w'')\|+\|F_f(\w'')-F_f(\w')\|&+\|F_f(\w')-F_f(\z')\|\\
 &\leq \frac \epsilon 2 + 0 + 0 + \frac \epsilon 2 = \epsilon.
\end{align*}
The right side of fig \ref{almostvertical} shows that every two-dimensional biorbit contains vectors arbitrarily close to vertical, ie. arbitrarily close to $\V$ just taking the vector perpendicular to the horocycle sufficiently close to the real axis.

Using biinvariance, we see that $F_f$ is constant on $\V$; evidently this constant is $\|f\|_2^2= F_f(\x_0)$.  By the Cauchy-Schwarz inequality (and the fact that we chose $f$ real), we see that for all $A \in \PSL_2(\R)$, $T_A f=\pm f$ in $L^2(\X)$ with the sign depending continuously on $A$. Since $\PSL_2(\R)$ is connected, $f$ is a constant element of $L^2(\X)$ and the
theorem follows.
\end{proof}

\section {Equidistribution of the horocycle flow}

\bigskip

The main result of this paper is theorem \ref{Ratnersthm}.

\begin{theorem} \label{Ratnersthm} \cite{DaniSmillie84}
Let $X$ be a complete hyperbolic surface of finite area. Then every horocycle
on $\X $ is either periodic or equidistributed in $\X $.
\end{theorem}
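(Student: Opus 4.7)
The plan is to combine the ergodicity of $U_+$ (Theorem \ref{Hedlund}), the resulting mixing of $U_+$ via Howe--Moore, and a non-divergence estimate of Dani that controls escape of mass into the cusps. Write $\X = \Gamma \backslash \PSL_2\R$ and fix $\x = \Gamma g$. First I would characterize periodicity algebraically: $\x U_+$ is periodic iff $\Gamma \cap g U_+ g^{-1}$ is nontrivial, iff the tangent endpoint $g\cdot\infty \in \partial H$ of the horocycle is a parabolic fixed point of $\Gamma$ --- i.e.\ a cusp of $X$. So the theorem reduces to showing that whenever $g\cdot\infty$ is not a cusp, then for every $\phi \in C_c(\X)$,
$$
\frac{1}{T}\int_0^T \phi(\x U_+^s)\,ds \;\longrightarrow\; \int_\X \phi\,d\omega_\X.
$$

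The engine is the conjugation identity $G^{-t} U_+^s G^t = U_+^{s e^{-t}}$, verified directly from the matrices in Section 2. A change of variables (with $t = \log T$) rewrites the time-$T$ average as an integral
$$
\int_0^1 \phi\bigl((\x G^t) U_+^u G^{-t}\bigr)\,du
$$
of $\phi \circ G^{-t}$ along a unit $U_+$-arc through $\x G^t$. To extract mixing from this pointwise expression, I would pair against a bump function $\psi$ with $\int \psi\,d\omega_\X = 1$ concentrated in a small box around $\x$: using the measure-preserving properties of left/right $\PSL_2\R$-translation on $\omega$ and the identity $G^t U_+^{-u} G^{-t} = U_+^{-u e^t}$, a further change of variables yields
$$
\int \psi(\x) \Bigl(\frac{1}{T}\int_0^T \phi(\x U_+^s)\,ds\Bigr) d\omega_\X(\x) \;=\; \int_0^1 \int \psi(\z U_+^{-u e^t}) \phi(\z)\,d\omega_\X(\z)\,du,
$$
and as $t \to \infty$ each inner integral tends, by mixing of $U_+$ (ergodicity of $U_+$ combined with the Howe--Moore decay of matrix coefficients for the $\PSL_2\R$-representation on $L^2_0(\X)$), to $\bigl(\int \psi\,d\omega_\X\bigr)\bigl(\int \phi\,d\omega_\X\bigr) = \int\phi\,d\omega_\X$. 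Shrinking the support of $\psi$ around $\x$, and using that the horocycle average depends quasi-continuously on the base point, should upgrade this spatial statement to the desired pointwise convergence at $\x$.

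The quasi-continuity / shrinking step is the decisive obstacle and rests on Dani's non-divergence theorem: for every $\epsilon > 0$ there exists a compact $K \subset \X$ such that, whenever $\x U_+$ is not periodic,
$$
\limsup_{T\to\infty} \frac{1}{T}\bigl|\{s\in[0,T] : \x U_+^s \notin K\}\bigr| \;<\; \epsilon.
$$
Without such a bound the averages along neighboring horocycles cannot be compared, because mass can leak into the cusps of $\X$ arbitrarily fast. I would prove the bound geometrically. Lift the horocycle to a Euclidean circle in $H$ tangent to $\partial H$ at the non-cusp point $g\cdot\infty$, and pick a $\Gamma$-invariant disjoint family of open horoball neighborhoods of the cusps (Shimizu--Margulis). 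A short convexity computation shows that inside any one horoball the depth function along the circle is quasi-concave, so that each excursion into a given horoball has arclength controlled by its maximal depth and by the distance from the circle's tangent point to the cusp's fixed point; summing this estimate over $\Gamma$-translates of the horoballs, and exploiting that $g\cdot\infty$ is not itself a cusp, bounds the total time spent deep in cusps. If the inequality ever failed, the horocycle would have to remain inside one horoball, forcing $g\cdot\infty$ to be a cusp and contradicting non-periodicity. With non-divergence in hand, the mixing argument above extends uniformly from $\phi$ supported in $K$ to arbitrary $\phi \in C_c(\X)$, and the theorem follows.
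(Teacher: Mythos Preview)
There is a genuine gap at the ``shrink $\psi$'' step, and non-divergence does not close it. Your paired identity
\[
\int_\X \psi(\x')\Bigl(\frac{1}{T}\int_0^T \phi(\x' U_+^s)\,ds\Bigr)\,d\omega_\X(\x')
\;=\;\int_0^1\!\!\int_\X \psi(\z U_+^{-uT})\,\phi(\z)\,d\omega_\X(\z)\,du
\]
is correct, and its convergence to $\int\phi$ holds---but note that this much follows already from the \emph{mean} ergodic theorem for $U_+$ (von Neumann), without Howe--Moore: the left side is just $\langle\psi,A_T\phi\rangle$ with $A_T\phi\to\int\phi$ in $L^2$. So what your argument actually establishes is weak-$L^2$ convergence of the time averages, which is far weaker than pointwise equidistribution of every non-periodic orbit. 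To pass from the smeared statement to convergence at the specific point $\x$, you would need the map $\x'\mapsto \frac{1}{T}\int_0^T \phi(\x'U_+^s)\,ds$ to be approximately constant on $\operatorname{supp}\psi$ with error uniform in $T$. That is false on any fixed neighborhood: perturbing $\x$ in the $U_-$ direction by $r$ produces horocycles that separate like $rT$ by time $T$ (exactly the conjugation $U_+^s U_-^r U_+^{-s}$ you are exploiting), so the relevant neighborhood must shrink like $1/T$. Once $\psi=\psi_T$ shrinks with $T$, the mixing/mean-ergodic input is no longer uniform, and the argument collapses unless you supply an effective rate. Dani non-divergence does not help here: it bounds the time a \emph{single} orbit spends near the cusps, but says nothing about how time averages at nearby basepoints compare.

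The paper handles exactly this difficulty, by a different route. Its engine is the transversal estimate (Lemma~\ref{horokeeptogether}): for $\y\in S_\x(\delta/T,\delta)$ one has $d\bigl(\x u_+(s),\y u_+(\alpha_\x(\y,s))\bigr)\le C\delta$ for all $s\le T$, after a controlled time-reparametrization $\alpha$. This makes the horocycle average at $\x$ close to that at any $\y$ in the \emph{$T$-dependent} rectangle $S_\x(\delta/T,\delta)$. The paper then uses only ergodicity (Theorem~\ref{Hedlund}) together with Birkhoff and Egorov to produce, for each $T_n$ with $\x g(\log T_n)\in\X_c$, a Birkhoff-generic $\y_n$ inside the corresponding thin box $W_\x(\delta/T_n,\delta,\epsilon\delta T_n)$; comparing $\x$ to $\y_n$ via the lemma yields equidistribution along the subsequence $T_n$ (Theorem~\ref{goodtimesthm}). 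The upgrade from a subsequence to all $T$ is then obtained not by mixing but by a measure-classification step (Proposition~\ref{classmeasures}): any weak-$*$ limit of $\nu_{\x,T}$ is $U_+$-invariant, and the non-divergence estimate (Proposition~\ref{takelongtogetthere}) is used precisely to show that such a limit is a \emph{probability} measure giving zero mass to $\P_\X$, hence equals $\omega_\X$. Your sketch correctly isolates non-divergence as essential, but it enters at a different point and for a different purpose than you propose; the missing idea in your outline is the shrinking-transversal comparison (or, alternatively, an effective mixing rate strong enough to beat the $1/T$ shrinking of $\psi_T$).
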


\medskip

This theorem is evidently a much stronger statement than \ that the horocycle
flow is ergodic, or even that it is uniquely ergodic. \ It is not an ``almost
everywhere''statement, but rather it asserts that \textit{every horocycle} is
either periodic or equidistributed in $\X $.

Note that this depends crucially on the fact that horocycles have geodesic curvature 1.  The statement is false for
geodesics (geodesic curvature 0): geodesics can do all sorts of things other than being periodic or equidistributed.
For instance, they can spiral towards a closed geodesic, or be dense in a geodesic lamination, or spiral towards a
geodesic lamination. Curves with constant geodesic curvature $<1$ stay a bounded distance away from a geodesic, and
hence can do more or less the same things as geodesics; in particular, they do not have to be periodic or
equidistributed.

On the other hand, curves with geodesic curvature $>1$ are always periodic, hence never equidistributed.

The horocycle flow is also distinguished from flows along curves of geodesic curvature less than 1 by having entropy 0.  We will not define entropy here, but whatever definition you use it is clear that if you speed up a flow by a factor $\alpha>0$, the entropy will be multiplied by $\alpha$.  But the formula

$$
G(-s)U^{+}(t)G(s)=U^+(\exp(-s)t)
$$

shows that the horocycle flow is conjugate to itself speeded up by $\exp(-s)$, thus its entropy must be 0.

\section{The Geometry of Flows in $\H$}

The geodesic flow in $\X$ has {\it stable} and {\it unstable} foliations: two points $\x_1, \x_2\in \X$ belong to the
same leaf of the stable foliation if $d(\x_1g(t),\x_2g(t))$ is bounded as $t \to +\infty$, and they belong to the same
leaf of the unstable foliation if $d(\x_1g(t),\x_2g(t))$ is bounded as $t \to -\infty$.  These foliations are very easy
to visualize in $T^1\H$, as shown in Figure \ref{stablemanifolds}.

\begin{figure}[!ht]
\begin{centering}
\includegraphics[scale=0.5] {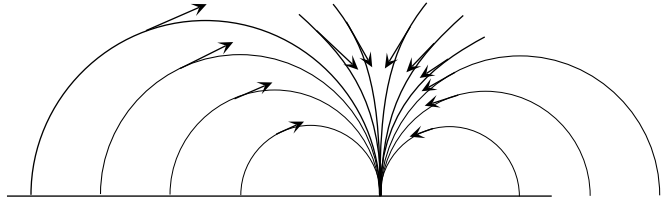}
\caption {\label  {stablemanifolds}The tangent vectors to geodesics ending at the same point at infinity form one leaf of the stable foliation for the geodesic flow. Similarly, the tangent vectors to geodesics emanating from a
point at infinity form a leaf of the unstable foliation.  In $\X$, these leaves are tangled up in some very complicated
way (after all, most geodesics are dense, never mind their stable and unstable manifolds). But clearly each leaf is an
immersed smooth surface, hence of measure $0$ in $\X$.}
\end{centering}
\end{figure}

Note that the stable leaves are fixed by the positive horocycle flow: the positive horocycles are the
curves orthogonal to the geodesics in a leaf.  Similarly, the unstable manifolds are fixed by the negative geodesic
flow, and the negative horocycles in a leaf are the curves orthogonal to the geodesics in that leaf.

On the other hand, the positive horocycles are transverse to the unstable manifolds, and positive horocycle flow does
not send unstable leaves to unstable leaves.

Let $S_{\x }$ be the unstable manifold of the geodesic flow through $\x$. Define $S_{\x }(a,b)\subset S_\x$ by
$$
S_\x(a,b)=\{\x u_-(r)g(s), |r|\le a, |s| \le b\}.
$$
We will refer to $S_\x(a,b)$ as a ``rectangle''; it isn't really: it is a quadrilateral bounded by two arcs of geodesic
of length $2b$, and be two arcs of negative horocycle, of length respectively $ae^b$ and $ae^{-b}$ (see figure
\ref{SxandVx}).

Further we define the ``box''
$W_\x (a,b,c)\subset \X$ as the region obtained by flowing along positive horocycles from $S_\x(a,b)$ until you hit
$S_{\x u_+(c)}$. Because $S_{\x u_+(c)}$ is actually dense in $\X$, you have to understand the flow as taking place in
the universal covering space $\H$, and then projecting the ``box'' to $\X$ (see figure \ref{SxandVx} again).

Each surface $S_\x$ is invariant under geodesic flow, but the
``rectangles''  $S_{\x }(a,b)$ are not; instead we have
$$
S_{\x }(a,b) g(s)=S_{\x  g(s)}(e^{s} a, b).
$$

Moreover, the surfaces $S_{\x u_+(s) }$ foliate a neighborhood of the positive
horocycle $\x * U_+$ through $\x $, and thus there
is a function $\alpha_\x(\y,t): \R \to
\mathbb{R}$ for $\mathbf{y} \in S_{\x }$ (for its precise domain, see below) such that
$$
\y u_{+}(\alpha_{\x }(\y, t)) \in S_{\x  u_{+}(t)}.
$$

\bigskip

\noindent as sketched in Figure \ref{SxandVx}. \newline\begin{figure}[ptb]
\begin{centering}
\includegraphics[scale=0.65]{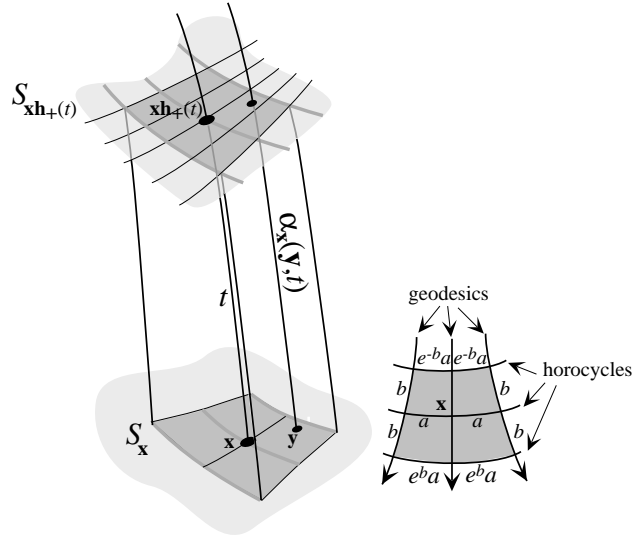}
\caption{\label {SxandVx} The surfaces $S_{\x u_+(s)}$ foliate a neighborhood of the positive horocyclic orbit
 of $\x $.  Thus, for every $t$ and every $\y \in S_\x$ sufficiently close to $\x$, there is a time
$\alpha_\x
(\y, t)$  such that the positive horocycle $\y u_{+}(\R)$ intersects $S_{\x u_+(t)}$.}
\end{centering}
\end{figure}

The function $t \mapsto \alpha_\x(\y,t)$ is defined in $[0, T(\y))$ for some $T(\y)$ that
tends to $\infty$ as $\y\to \x$.  Moreover, the function is $C^\infty$ (actually real-analytic) by the implicit
function theorem, and
$\frac d{dt} \alpha_\x(\y,t)$ tends to $1$ as $\y \to \x$, so that
$$
\lim_{\y \to \x} \frac {\alpha_\x(\y,t)}t \to 1.
$$

It isn't often that you can replace the implicit function theorem by an explicit formula, but this does occur here.

\begin{lemma}\label{explicitalpha}  If $\y= \x u_-(r)g(t)$, then
\begin{equation} \label{explicitalpha}
\alpha_\x(\y,s) = \frac s{e^t(1-rs)}.
\end{equation}

In particular, $\alpha_\x(\y,s)$ is defined in  $\y\in S_\x(a,b)$ for all $s<1/a$ and all $b$, and
$$
\lim_{\y \to \x} \frac{d}{ds}\alpha_\x(\y,s)=1,\quad \lim_{\y \to \x} \frac{\alpha_\x(\y,s)}s=1.
$$
\end{lemma}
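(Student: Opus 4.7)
The plan is to reduce everything to a direct matrix computation in $\PSL_2\R$ using the identification $\Phi$, since the flows $u_\pm, g$ are right actions by the one-parameter subgroups $U_\pm, G$. By the left-equivariance of everything in sight under $\PSL_2\R$, I may translate so that $\x$ corresponds to the identity; then $\y = \x u_-(r)g(t)$ corresponds to the matrix $U_-^r G^t$, a point of $S_\x$ corresponds to an element of the form $U_-^{r'}G^{t'}$, and a point of $S_{\x u_+(s)}$ corresponds to an element of the form $U_+^s U_-^{r'} G^{t'}$.

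The defining condition $\y u_+(\alpha_\x(\y,s))\in S_{\x u_+(s)}$ therefore translates to the matrix equation
\begin{equation*}
U_-^r\, G^t\, U_+^{\alpha} \;=\; U_+^s\, U_-^{r'}\, G^{t'}
\end{equation*}
for some (unknown) $r',t'$ and with $\alpha=\alpha_\x(\y,s)$. The next step is to multiply out both sides as $2\times 2$ matrices, giving on the left
$\bigl(\begin{smallmatrix}e^{t/2}&\alpha e^{t/2}\\ re^{t/2}&r\alpha e^{t/2}+e^{-t/2}\end{smallmatrix}\bigr)$
and on the right
$\bigl(\begin{smallmatrix}(1+sr')e^{t'/2}&se^{-t'/2}\\ r'e^{t'/2}&e^{-t'/2}\end{smallmatrix}\bigr)$,
and then to equate entries. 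The $(2,2)$ entry together with the $(1,2)$ entry gives two equations in the two unknowns $\alpha$ and $t'$; eliminating $t'$ yields $e^{-t'/2}(1-rs)=e^{-t/2}$, and substituting back produces the closed form $\alpha = s/\bigl(e^t(1-rs)\bigr)$. The remaining two entries determine $r'$ consistently and serve as a sanity check.

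Once the formula is established, the domain statement is a routine observation: the formula makes sense precisely when $1-rs\ne 0$, and for $\y\in S_\x(a,b)$ we have $|r|\le a$, so $|s|<1/a$ forces $|rs|<1$ and hence $1-rs>0$. The two asymptotic statements as $\y\to\x$ (meaning $r,t\to 0$) follow by inspection from the explicit formula: $\alpha_\x(\y,s)/s = 1/(e^t(1-rs))\to 1$, and differentiating in $s$ gives $\tfrac{d}{ds}\alpha_\x(\y,s) = 1/\bigl(e^t(1-rs)^2\bigr)\to 1$.

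The only mild obstacle is the usual one when passing between $\SL_2\R$ and $\PSL_2\R$: the equation above is solved in $\SL_2\R$, but equality only needs to hold modulo $\pm I$. This is harmless because for $r,t,s$ near $0$ all matrices in sight are close to the identity, so the $+I$ representative is the correct one, and then everything extends by analytic continuation in the domain where $1-rs\ne 0$. Beyond this bookkeeping, the proof is a bare-hands $2\times 2$ matrix multiplication.
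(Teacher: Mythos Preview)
Your proof is correct and follows exactly the same route as the paper's: reduce by left-equivariance to the identity, write the defining condition $\y u_+(\alpha)\in S_{\x u_+(s)}$ as the matrix equation $U_-^r G^t U_+^\alpha = U_+^s U_-^{\rho} G^{\tau}$, multiply out, and solve. You have in fact supplied more detail than the paper (which simply says ``multiply out and check''), and your verification of the domain and the two limits is accurate.
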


\begin{proof} This is a matter of solving the equation
$$
\x u_-(r)g(t)u_+(\alpha_\x(y,s))=\x u_+(s)u_-(\rho)g(\tau),
$$
i.e., the matrix equation
$$
\bmatrix 1&0\\r&1\endbmatrix  \bmatrix e^{t/2}&0\\0&e^{-t/2}\endbmatrix \bmatrix 1&\alpha\\0&1\endbmatrix=
\bmatrix 1&s\\0&1\endbmatrix   \bmatrix 1&0\\\rho&1\endbmatrix
 \bmatrix e^{\tau/2}&0\\0&e^{-\tau/2}\endbmatrix.
$$
This is a system of 3 equations (because the determinants are all 1) in 3 unknowns $\rho, \tau$ and $\alpha$.  Just
multiply out and check.
\end{proof}
\bigskip

 The central result here is the following:

\medskip

\begin{lemma}\label{horokeeptogether}
There exists a constant $C$ such that for all $0< \delta< 1/2$, all $t > 0$, all
$\y \in S_{\x }(\frac{\delta}{t},\delta)$ and all $0 \leq s
\leq t$ we have
$$
d(\x  u_{+}(s), \y u_{+}(\alpha_\x(\y, s)) \leq C \delta .
$$
\end{lemma}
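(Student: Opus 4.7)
The strategy is to reduce the bound to a computation inside $\PSL_2\R$ by using the matrix identity that underlies Lemma \ref{explicitalpha} and then invoking the left-invariance of the chosen metric. Write $\y = \x u_-(r)g(u)$ with $|r|\le \delta/t$ and $|u|\le \delta$. The same matrix equation solved in the proof of Lemma \ref{explicitalpha},
$$
u_-(r)\,g(u)\,u_+(\alpha) \;=\; u_+(s)\,u_-(\rho)\,g(\tau),
$$
identifies $\rho$ and $\tau$ explicitly (not just $\alpha$): equating the $(2,1)$-entries gives $\rho=r/(1-rs)$, and $e^\tau=e^u(1-rs)^2$, i.e.\ $\tau=u+2\log(1-rs)$. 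Consequently $\y u_+(\alpha_\x(\y,s)) = \x u_+(s)\cdot u_-(\rho)g(\tau)$ as a right-action identity, and no further matrix work is needed.

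Next, the hypotheses yield uniform bounds on $\rho$ and $\tau$. Since $|r|\le \delta/t$ and $0\le s\le t$, we have $|rs|\le\delta<1/2$, whence $|1-rs|\ge 1/2$. This immediately gives
$$
|\rho| \;\le\; 2|r| \;\le\; 2\delta/t, \qquad |\tau|\;\le\; |u| + 2|\log(1-rs)| \;\le\; \delta + 4|rs| \;\le\; 5\delta,
$$
using $|\log(1-x)|\le 2|x|$ for $|x|\le 1/2$. So $\tau$ is already of size $O(\delta)$.

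Now I invoke the left-invariance of the Riemannian metric on $\PSL_2\R$ (and hence of the induced metric on $\X$). Writing $\x=\Phi(A)$, both $\x u_+(s)=\Phi(A U_+^s)$ and $\y u_+(\alpha_\x(\y,s))=\Phi(A U_+^s U_-^\rho G^\tau)$ are obtained by left-translating $\x_0$ and $\Phi(U_-^\rho G^\tau)$ by the same isometry $AU_+^s$; therefore
$$
d\bigl(\x u_+(s),\,\y u_+(\alpha_\x(\y,s))\bigr) \;=\; d\bigl(\x_0,\,\Phi(U_-^\rho G^\tau)\bigr).
$$
This reduces the lemma to estimating the distance from $\x_0$ to a specific element of $\PSL_2\R$ whose parameters are under control. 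I would bound this distance by concatenating two pieces: the geodesic path $t\mapsto \Phi(G^{t\tau/|\tau|})$ contributes length $|\tau|\le 5\delta$, and then the path along $U_-$ from $\Phi(G^\tau)$ to $\Phi(U_-^\rho G^\tau)$ is estimated by a curve that exploits both the horocyclic direction and the angular direction appearing in the metric $(dx^2+dy^2)/y^2+d\theta^2$.

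The main obstacle is precisely this last bound: translating $|\rho|\le 2\delta/t$ into a contribution of $O(\delta)$ rather than $O(\delta/t)$ to the distance. A naïve bound by the arc length of the negative horocycle trajectory gives a factor of $|\rho|$, which is too weak for small $t$. The point where one must work carefully is to use the explicit geometry of the matrix $U_-^\rho G^\tau$ — in particular how the factor $G^\tau$ rescales the horocyclic direction — together with the constraint $|\rho s|\le 2\delta$ that we get \emph{for free} from $|rs|\le\delta$ and $|1-rs|\ge 1/2$. Extracting the right combination of these factors is the crux of the argument and the only step where one cannot simply write down the answer by plugging into the formulas for $\rho$ and $\tau$.
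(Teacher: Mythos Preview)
Your approach---computing $\rho,\tau$ explicitly from the matrix identity of Lemma~\ref{explicitalpha} and then using left-invariance of the metric to reduce everything to bounding $d(\x_0,\Phi(U_-^\rho G^\tau))$---is genuinely different from the paper's. The paper instead conjugates by geodesic flow for time $\log t$, sending the skinny rectangle $S_\x(\delta/t,\delta)$ to the standard square $S_{\x g(\log t)}(\delta,\delta)$; there a uniform Lipschitz constant controls the horocyclic holonomy on a box of bounded geometry, and flowing back by $g(-\log t)$ contracts the unstable leaf. Your route replaces this renormalization picture by a direct estimate. Both arrive at the same effective bound $d\le C(|\tau|+|\rho|)$ with $|\tau|\le 5\delta$ and $|\rho|\le 2\delta/t$.

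About the ``crux'' you isolate at the end: for $t\ge 1$ there is none. Then $|\rho|\le 2\delta/t\le 2\delta$, and the naive arc-length along $r\mapsto \Phi(U_-^{r}G^\tau)$ already contributes at most a constant times $e^{|\tau|}|\rho|\le C\delta$, so your argument is complete in that range with no further trick needed. For $t<1$, on the other hand, the obstacle is real and \emph{cannot} be overcome, because the lemma as stated is actually false there: take $u=0$, $s=0$, $r=\delta/t$; then $\alpha=0$ and the claim becomes $d(\x,\x u_-(\delta/t))\le C\delta$, whose left side tends to infinity as $t\to 0$. The paper's own proof shares this defect---for $t<1$ the return flow $g(-\log t)$ is \emph{forward} geodesic flow, which expands the unstable leaf by a factor $1/t$ rather than contracting it. None of this matters for the application, since the lemma is only ever invoked at times $t=T_n\to\infty$; the hypothesis should really read $t\ge 1$, and with that amendment your proof is finished.
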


Note that $\alpha_\x (\y ,s)$ is defined for $s\le t$ when $\y \in S_{\x }(\frac{\delta}{t},\delta)$ and $\delta\le
1/2$, since for the factor $1-rs$ from the denominator of formula \ref{explicitalpha}, we have $r\le \delta/t$ and
$s\le t$, so $1-rs \ge 1-\delta^2=3/4$.

\begin{figure}[!ht]
\begin{centering}
\includegraphics[scale=0.55]{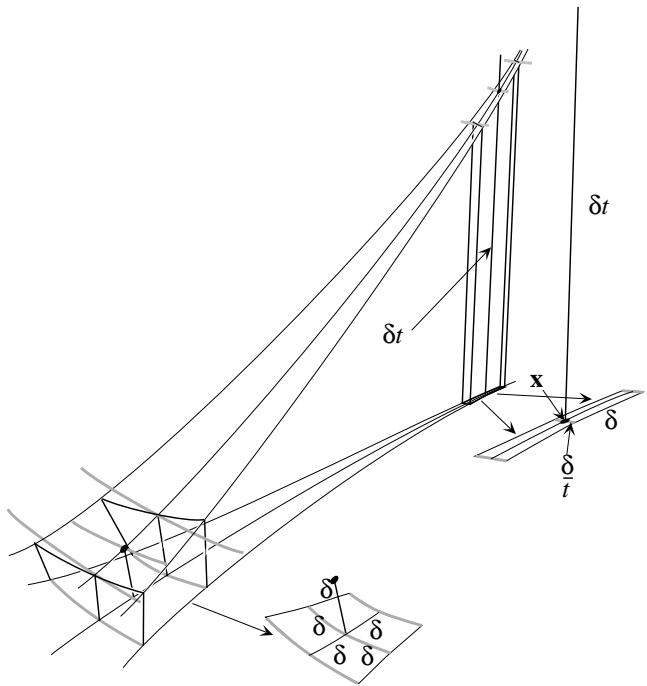}
\caption{\label{skinnyandstandardboxes} The skinny ``rectangle'' $S_{\x }(\frac{\delta}{t},\delta)$
becomes under the geodesic flow for time $\log{t}$ the ``square'' $S_{\x  g(\log{t})}(\delta, \delta)$,
and the box $W_{\x }(\frac{\delta}{t},\delta, \delta t)$ becomes the box $W_{\x g(\log t) }(\delta, \delta, \delta)$.
The geometry of $W_{\x g(\log t) }(\delta, \delta, \delta)$ is standard: it depends only on
$\delta$.  In particular, the positive horocyclic flow from the bottom
$S_{\x }(\frac{\delta}{t},\delta)$ of the box is defined since $\delta \leq 1/2$, hence $C^\infty$, hence Lipschitz
with a universal constant $C$.}
\end{centering}
\end{figure}

\begin{proof}
The proof essentially consists of gazing at Figure \ref{skinnyandstandardboxes}. Almost everything in
that figure comes from the fact that the geodesic flow takes horocycles to
horocycles; moreover, geodesic flow for time $t$ maps a segment of positive
horocycle of length $l$ to one of length $e^{-t} l$, and a segment of
negative horocycle of length $l$ to one of length $e^{t} l$.

Two points $\x,\y$ with $\y \in S_\x(\delta/t,\delta)$ flow under the geodesic flow for time $\log t$ to two
points
$\x'=\x g(\log t)$ and $\y'=\y g(\log t)$; note that $\y'\in S_{\x'}(\delta,\delta)$, so certainly $d(x',y')\le
2\delta$. Then under positive horocycle flow (for different times) these points flow to  points
$$
\x''= \x'u_+(s\delta) \quad \text{and}\quad \y''\in S_{\x''}.
$$
By the argument in the caption of figure \ref{skinnyandstandardboxes}, there exists a universal constant $C$
such that
$d(\x'',\y'')\le 2
Cd(\x,\y)$. Finally, use the geodesic flow back, i.e., for time $-\log t$, to find points
$$
\x'''= \x u_+(s),\ \y'''= \y u_+(\alpha_\x(\y,s)).
$$
Since backwards geodesic flow in a single unstable manifold is contracting, we find $d(x''', y''')\le 2C\delta$.

\end{proof}

\section{Geometry of hyperbolic surfaces and cusps}

Let $X$ be a complete hyperbolic surface. If such a surface is not compact, it has finitely many {\it cusps}. Every
cusp
$c$ is surrounded by closed horocycles, and the open region bounded by the horocycle of length 2 is
a neighborhood $N_{c}$ isometric to the region $ 2\Z \backslash \{y\ge 1\}$  that is embedded in $X$, moreover, if $c$,
$c'$ are distinct cusps, then $N_{c} \cap N_{c'} = \emptyset$.  If $\X$ has finite area and since each of the disjoint neighborhoods $N_c$ of the cusps has area 2, there are only finitely many cusps. \cite{Hubbard06}.

We know everything about the standard cusp $ \{y\ge 1\}/2\Z$, in particular that any geodesic that enters it will leave it again unless it goes directly to the cusp, i.e., unless it is a vertical line. Thus the same holds for all $N_c$.

\goodbreak

\begin{figure}[!ht]
\begin{centering}
\includegraphics[scale = 0.5] {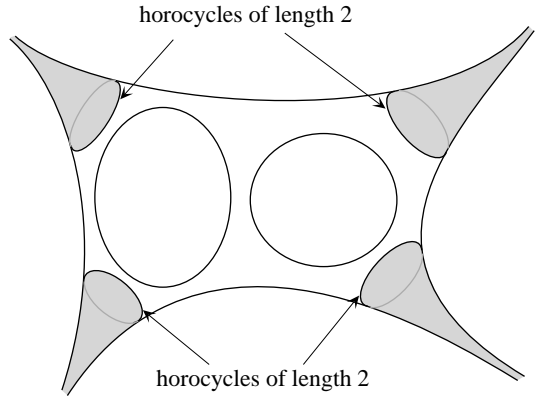}
\caption{\label{surfacewithcusps} A non-compact complete hyperbolic surface is always non-compact in the same
way: it has cusps
 $c$ with disjoint standard neighborhoods  $N_c$ isometric to   $ \{y\ge 1\}/2\Z$, hence bounded by horocycles
of length 2.  Note that the only way a geodesic $\gamma(t)$ can stay in such a neighborhood for all $t\ge t_0$ is to
head straight to the cusp. Each cusp has a stable manifold in $X$, and the geodesics that do not return infinitely
many times to $X_c= X-\cup_c N_c$ are those that belong to one of these stable manifolds.}

\end{centering}
\end{figure}

If  $X$ has finite area, then the complement of these neighborhoods is compact:
\[
X_c= X-\bigsqcup_{\text{cusps $c$ of $X$}} N_{c}
\]
is a compact set. Denote by $\X_c$ the corresponding part of
$\X$. The injectivity radius is bounded below on $\X_c$, so  there is a number $\delta_{\X } > 0$ such that
for every $\x \in\X_c$  the box
$W_{\x }(\delta_\X ,\delta_\X ,\delta_\X)$ is embedded in $\X$.

Now, suppose that $\X $ has finite measure. Then if $(x,\xi )
\in\X $ is a point through which the positive horocycle is not periodic, the
geodesic through $(x,\xi )$ does not go forward to a cusp and hence must
enter $\X_c$ infinitely many times.\newline

All periodic horocycles are homotopic to horocycles surrounding cusps.  Indeed, if we apply geodesic flow to a positive horocycle, it will become arbitrarily short, thus will either be contained in a neighborhood of a cusp or in a contractible subset of $\X$.  No horocycle in such a contractible subset is closed, thus the horocycle is homotopic to a horocycle surrounding a cusp. 

Thus, the set $\P_\X\subset \X$ of points defining periodic positive horocycles is the union of the stable
manifolds of the cusps (for the geodesic flow). 
\begin{lemma}
The set $\P_\X$ has measure zero in $\X$ for the measure $\omega_\X$.
\end{lemma}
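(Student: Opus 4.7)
The plan is to exploit the characterization of $\P_\X$ just established: it is the union, over the finitely many cusps $c$ of $X$, of the stable manifolds $W^s(c)$ of those cusps for the geodesic flow. Since there are only finitely many cusps, it is enough to show that each $W^s(c)$ has $\omega_\X$-measure zero. The key mechanisms I would use are (i) a dimension count inside the standard cusp neighborhood and (ii) right-invariance of $\omega_\X$ under geodesic flow.

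First I would analyze what $W^s(c)$ looks like inside the standard cusp neighborhood $N_c \cong 2\Z\backslash\{y\ge 1\}$. As already noted in the excerpt, a geodesic that stays in $N_c$ for all $t\ge t_0$ must head straight to the cusp, i.e.\ it is a vertical line. Consequently the set
$$A_c=\{\x\in T^1 N_c\subset \X : \x g(t)\in N_c\text{ for all }t\ge 0\}$$
consists exactly of the upward-pointing vertical unit vectors based at points of $N_c$. In the $(x,y,\theta)$ coordinates used earlier (with $\xi=ye^{i\theta}$) this is the slice $\theta=\pi/2$, a 2-dimensional submanifold of the 3-dimensional $T^1 N_c$. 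Since $\omega=y^{-2}\,dx\wedge dy\wedge d\theta$, this slice has $\omega_\X$-measure zero.

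Next I would upgrade this to a statement about all of $W^s(c)$ by noting that
$$W^s(c)=\bigcup_{n=1}^\infty A_c\cdot g(-n).$$
Indeed, $\x\in W^s(c)$ means that $\x g(t)$ enters $N_c$ and then stays forever, and if this happens then for all sufficiently large integers $n$ we have $\x g(n)\in A_c$; conversely any $\x$ in the right-hand set has $\x g(n+t)\in N_c$ for all $t\ge 0$. Because $\omega_\X$ comes from bi-invariant Haar measure on $\PSL_2\R$, it is preserved by the right action $\x\mapsto \x g(-n)$, so each translate $A_c\cdot g(-n)$ has $\omega_\X$-measure zero, and hence so does the countable union $W^s(c)$. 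Taking the finite union over cusps $c$ yields $\omega_\X(\P_\X)=0$.

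There is no real obstacle here; the only small subtlety is replacing the literal condition ``$\x g(t)\in N_c$ for all sufficiently large $t$''---a priori an uncountable union over thresholds $T$---by the countable union over integer thresholds $n$, so that invariance of $\omega_\X$ under the discrete sequence of maps $\x\mapsto \x g(-n)$ is enough to reduce everything to the single null set $A_c$. (One could alternatively deduce measure zero from the ergodicity of $U_+$ established in Theorem~\ref{Hedlund}, since $\P_\X$ is $U_+$-invariant, but one would still need to exhibit a non-periodic horocycle to rule out full measure, which essentially requires the argument above anyway.)
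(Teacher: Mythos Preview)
Your proof is correct and rests on the same idea as the paper's: each cusp's stable manifold is two-dimensional inside the three-dimensional $\X$, hence null, and there are only finitely many cusps. The paper's own proof is a single sentence---``each has a stable manifold which is a smooth immersed surface, certainly of 3-dimensional measure $0$''---invoking directly the standard fact that a smoothly immersed surface in a $3$-manifold has measure zero. You instead unpack this by exhibiting an explicit null slice $A_c\subset T^1 N_c$ (the upward vertical vectors, $\theta=\pi/2$) and then writing $W^s(c)$ as a countable union of right geodesic-flow translates $A_c\, g(-n)$, using the right-invariance of $\omega_\X$. This is a little more work but is entirely self-contained: it avoids citing any general measure-theoretic statement about immersed submanifolds, relying only on the coordinate form of $\omega$ and countable additivity. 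Your closing parenthetical about the ergodicity route is also accurate: $\P_\X$ is $U_+$-invariant, so Theorem~\ref{Hedlund} forces its measure to be $0$ or $1$, and ruling out full measure again comes down to the dimension count.
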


\begin{proof} There are finitely many cusps, and each has a stable manifold which is a smooth immersed surface, certainly of
3-dimensional measure $0$.
\end{proof}

Although we have used the fact that $\X$ has finite area, the result is true for every complete hyperbolic surface, since such a surface can have only countably many cusps.

\section{A sequence of good times} In this section we prove a result, still a bit weaker than theorem \ref{Ratnersthm},
though it does prove theorem \ref{Ratnersthm} when $X$ is compact.

\begin{theorem} \label{goodtimesthm} Let $X$ be a complete hyperbolic surface of finite area, $\X$ be its unit tangent bundle.
For all  $\x\notin \P_\X$, there then exists a sequence $T_n\to \infty$ such that for any function $f\in C_c(\X)$ we have
$$
\lim_{n\to \infty} \frac 1{T_n} \int_0^{T_n} f(\x u_+(t)) dt = \int_\X f d\omega_\X.
$$
\end{theorem}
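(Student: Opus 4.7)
The plan is to construct the sequence $T_n\to\infty$ from geodesic return times to the compact part $\X_c$, then pass to a weak-$*$ subsequential limit of the time averages $\mu_T:=\frac{1}{T}\int_0^T\delta_{\x u_+(t)}\,dt$ and identify the limit with $\omega_\X$, using the shadowing of Lemma \ref{horokeeptogether} together with the ergodicity of $u_+$ from Theorem \ref{Hedlund}.

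Because $\x\notin\P_\X$, the forward geodesic orbit enters $\X_c$ infinitely often, so I pick $s_n\to\infty$ with $\x g(s_n)\in\X_c$ and set $T_n:=e^{s_n}$. The point of this choice is geometric: for $\delta<\delta_\X$, geodesic flow by time $s_n=\log T_n$ sends the skinny box $W_\x(\delta/T_n,\delta,T_n)$ to the standard box $W_{\x g(s_n)}(\delta,\delta,1)$ at a point of $\X_c$, which is embedded in $\X$ with bounded geometry independent of $n$. For fixed $f\in C_c(\X)$ and $\delta\in(0,1/2)$, Lemma \ref{horokeeptogether} gives $d(\x u_+(s),\y u_+(\alpha_\x(\y,s)))\le C\delta$ for every $\y\in S_\x(\delta/T_n,\delta)$ and all $0\le s\le T_n$, while the explicit formula in Lemma \ref{explicitalpha} controls $d\alpha_\x(\y,s)/ds$. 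Averaging over $\y$ by Fubini and using uniform continuity of $f$ then yields
\[
\left|\frac{1}{T_n}\int_0^{T_n}f(\x u_+(s))\,ds\ -\ \frac{1}{\omega_\X(W_\x(\delta/T_n,\delta,T_n))}\int_{W_\x(\delta/T_n,\delta,T_n)}f\,d\omega_\X\right|\le \eta_f(\delta),
\]
with $\eta_f(\delta)\to 0$ as $\delta\to 0$, uniformly in $n$.

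By Prokhorov, I pass to a subsequence (still denoted $T_n$) so that $\mu_{T_n}$ converges in the weak-$*$ topology to a positive measure $\nu$ on $\X$ of mass at most $1$. Since the box comparison equates $\mu_{T_n}(f)$, up to $\eta_f(\delta)$, with an $\omega_\X$-average over a fixed-geometry box in $\X_c$, testing against $f\in C_c(\X)$ of ever larger support shows that no mass escapes to the cusps, so $\nu$ is a probability measure. The standard estimate $|\mu_{T_n}(f\circ u_+(r))-\mu_{T_n}(f)|\le 2r\|f\|_\infty/T_n\to 0$ shows $\nu$ is $U_+$-invariant.

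The main obstacle is then the identification $\nu=\omega_\X$. I expect to invoke (or prove in situ) the classification of $U_+$-invariant probability measures on $\X$, the horocyclic case of Dani's theorem: every such measure is a convex combination of $\omega_\X$ and atomic pieces supported on the finitely many periodic $U_+$-orbits encircling the cusps. The box comparison above rules out the periodic components, since concentration of $\nu$ on $\P_\X$ would force the right-hand side of the inequality to concentrate on the image of a lower-dimensional stable manifold under the standardising geodesic flow, contradicting that it is a genuine three-dimensional $\omega_\X$-average over $W_{\x g(s_n)}(\delta,\delta,1)\subset \X_c$ of bounded geometry. Combined with Theorem \ref{Hedlund} applied to the surviving Haar component, this forces $\nu=\omega_\X$, which is exactly the claim of Theorem \ref{goodtimesthm}.
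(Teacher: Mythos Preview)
Your approach diverges from the paper's, and there is a genuine gap at the no-escape-of-mass step and at the measure-classification step.

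\textbf{What the paper does.} The paper fixes $\epsilon>0$ and $f\in C_c(\X)$, chooses $\delta$ from $\delta_\X,\delta_f,\delta_\alpha$, and then applies Birkhoff together with Egorov (Proposition~\ref{EgorovAndBirhoff}) to produce a set $\Y$ of measure $>1-\eta$ on which the horocyclic time averages converge uniformly. Since the skinny box $W_\x(\delta/T_n,\delta,\epsilon\delta T_n)$ has measure exactly $\eta$, it must meet $\Y$, yielding a single generic point $\y_n$ in the box. The shadowing Lemma~\ref{horokeeptogether} then compares $\frac1{T_n}\int_0^{T_n}f(\x u_+(s))\,ds$ to $\frac1{T_n}\int_0^{T_n}f(\y_n u_+(s))\,ds$ via a five-term telescoping estimate. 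No weak-$*$ limits, no tightness argument, and no measure classification are used; only Hedlund's ergodicity (Theorem~\ref{Hedlund}) enters, through Birkhoff. In the paper's logical order, the measure classification (Proposition~\ref{classmeasures}) is deduced \emph{from} Theorem~\ref{goodtimesthm}, not the other way around.

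\textbf{Where your argument breaks.} Your box-comparison inequality is fine, but the sentence ``an $\omega_\X$-average over a fixed-geometry box in $\X_c$'' conflates two different objects. The box $W_\x(\delta/T_n,\delta,T_n)$ is a long thin tube wound through $\X$; only its image under $g(\log T_n)$ sits in $\X_c$. The quantity $\frac{1}{\omega_\X(W)}\int_W f\,d\omega_\X$ is the average of $f$ over the \emph{original} tube, not over the renormalised box, and measure-preservation of $g$ would only equate it to the average of $f\circ g(-\log T_n)$ over $W_{\x g(s_n)}(\delta,\delta,1)$. So testing against compactly supported $f$ gives no information about how much of the tube lies near the cusps, and your tightness claim does not follow. (In the paper, the non-escape statement is Proposition~\ref{takelongtogetthere}, proved separately and only \emph{after} Theorem~\ref{goodtimesthm}.) Likewise, your elimination of periodic components is not an argument: the right-hand side of your inequality is a number, not a measure, and in any case the tube can pass through $\P_\X$ with positive $\omega_\X$-measure zero contribution; this says nothing about whether the weak limit $\nu$ charges $\P_\X$. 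Finally, invoking the classification of $U_+$-invariant probability measures here is circular within the paper's development, since Proposition~\ref{classmeasures} is proved using Theorem~\ref{goodtimesthm}. The clean fix is the paper's: use Egorov to land a single Birkhoff-generic $\y_n$ inside each skinny box and compare directly.
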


The proof will take the remainder of this section.

\medskip

\begin{proof} Let $T_n$ be any increasing sequence tending to $\infty$ such that $\x g(t) \in \X_c$.  Such a sequence exists because $\x \notin \P_\X$.

Choose $\epsilon>0$, and  $f\in C_c(\X)$; without loss of generality we may assume $\sup |f|= 1$ and that
$\epsilon<1$ .

We have already defined $\delta_\X$.  We need two more $\delta's$, to be specified in lemmas \ref{deltaf} and \ref{deltaalpha}.

\begin{lemma}\label{deltaf}  There exists $\delta_f>0$ such that for all $t>0$, if $\z \in S_\x(\delta_f/t,\delta_f)$ and $0\le s\le
t$, then
$$
 |f(\x u_+(s))-
f(\z u_+(\alpha_\x(\z,s)))|<\epsilon.
$$
\end{lemma}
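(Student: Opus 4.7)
The plan is to combine uniform continuity of $f$ with the Lipschitz-type estimate already proved in Lemma \ref{horokeeptogether}. Since $f \in C_c(\X)$, it is uniformly continuous on $\X$ (with respect to the metric defined earlier via $\Phi$ from the Riemannian structure on $\H$). Hence there exists $\eta > 0$ such that whenever $\mathbf{p}, \mathbf{q} \in \X$ satisfy $d(\mathbf{p}, \mathbf{q}) < \eta$, we have $|f(\mathbf{p}) - f(\mathbf{q})| < \epsilon$.

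Next I would invoke Lemma \ref{horokeeptogether}, which gives a universal constant $C$ with the property that for every $0 < \delta < 1/2$, every $t > 0$, every $\z \in S_\x(\delta/t, \delta)$ and every $0 \le s \le t$, one has
$$
d\bigl(\x u_+(s),\, \z u_+(\alpha_\x(\z, s))\bigr) \le C\delta.
$$
The idea is then simply to choose $\delta_f$ small enough that $C\delta_f < \eta$ and $\delta_f < 1/2$; concretely, take
$$
\delta_f := \min\!\left(\tfrac{1}{3},\, \tfrac{\eta}{2C}\right).
$$
With this choice, the hypothesis $\z \in S_\x(\delta_f/t, \delta_f)$ guarantees that Lemma \ref{horokeeptogether} applies and yields $d(\x u_+(s), \z u_+(\alpha_\x(\z, s))) \le C\delta_f < \eta$, whence $|f(\x u_+(s)) - f(\z u_+(\alpha_\x(\z,s)))| < \epsilon$ as required.

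There is essentially no obstacle here: the lemma is a direct packaging of uniform continuity of $f$ together with the geometric estimate from Lemma \ref{horokeeptogether}. The only small point to verify is that $\delta_f < 1/2$ so that Lemma \ref{horokeeptogether} is genuinely applicable, and that the estimate holds uniformly in $t$ and in $s \in [0,t]$, both of which are already built into the statement of Lemma \ref{horokeeptogether}. The significance of the lemma, rather than its difficulty, is that it allows us later to replace the single horocyclic orbit through $\x$ by a whole family of nearby horocyclic orbits starting from a skinny rectangle $S_\x(\delta_f/t, \delta_f)$, while controlling the error in the integrals of $f$ uniformly over the long time interval $[0,t]$.
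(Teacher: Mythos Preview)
Your proposal is correct and is exactly the approach the paper takes: the paper's proof is the one-line observation that the lemma follows immediately from Lemma~\ref{horokeeptogether} together with the uniform continuity of $f$. You have simply spelled out the details of choosing $\delta_f$ explicitly.
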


\begin{proof} This follows immediately from proposition \ref{horokeeptogether} and the uniform continuity of $f$.
\end{proof}

\begin{lemma} \label{deltaalpha} There exists $\delta_\alpha$ such that  for all $t>0$, if $\z \in S_\x(\delta_\alpha/t,\delta\alpha)$ and $0\le s\le t $, then
$$
|\alpha'_\x(\z,s)-1|<\epsilon.
$$
\end{lemma}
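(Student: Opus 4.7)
The plan is to reduce this to a direct computation using the explicit formula from Lemma \ref{explicitalpha}, followed by a continuity argument.

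First I would parametrize $\z$. Since $\z \in S_\x(\delta_\alpha/t,\delta_\alpha)$, by definition $\z = \x u_-(r)g(\tau)$ with $|r|\le \delta_\alpha/t$ and $|\tau|\le \delta_\alpha$. Lemma \ref{explicitalpha} then gives
$$
\alpha_\x(\z,s) = \frac{s}{e^\tau(1-rs)}.
$$
Differentiating with respect to $s$ using the quotient rule yields the compact formula
$$
\alpha'_\x(\z,s) = \frac{1}{e^\tau(1-rs)^2}.
$$

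Next I would control the two factors in the denominator under the hypotheses of the lemma. For $0\le s\le t$ and $|r|\le \delta_\alpha/t$, the product $|rs|\le \delta_\alpha$, so $(1-rs)^2$ lies in $[(1-\delta_\alpha)^2,(1+\delta_\alpha)^2]$; and $|\tau|\le \delta_\alpha$ gives $e^\tau\in [e^{-\delta_\alpha},e^{\delta_\alpha}]$. Thus $\alpha'_\x(\z,s)$ lies in the interval $[e^{-\delta_\alpha}(1+\delta_\alpha)^{-2},\,e^{\delta_\alpha}(1-\delta_\alpha)^{-2}]$, which is a neighborhood of $1$ shrinking to $\{1\}$ as $\delta_\alpha\to 0$.

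Finally I would invoke continuity: the function $(\tau,u)\mapsto e^{-\tau}(1-u)^{-2}$ is continuous at $(0,0)$ with value $1$, so I can choose $\delta_\alpha>0$ (depending only on $\epsilon$) small enough, say $\delta_\alpha\le 1/2$, so that the bracket above is contained in $(1-\epsilon,1+\epsilon)$. This $\delta_\alpha$ works uniformly in $t$ and $s$ because the bound $|rs|\le \delta_\alpha$ used only the joint constraint built into the definition of $S_\x(\delta_\alpha/t,\delta_\alpha)$, not the individual sizes of $r$ and $s$. There is no real obstacle here; the only thing to watch is that the scaling $r\le \delta_\alpha/t$ is precisely what makes the estimate uniform in $t$, mirroring the scaling already exploited in Lemma \ref{horokeeptogether}.
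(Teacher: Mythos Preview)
Your proof is correct and follows essentially the same approach as the paper: both compute $\alpha'_\x(\z,s)=e^{-\tau}(1-rs)^{-2}$ from the explicit formula of Lemma~\ref{explicitalpha}, bound it in the interval $[e^{-\delta_\alpha}(1+\delta_\alpha)^{-2},\,e^{\delta_\alpha}(1-\delta_\alpha)^{-2}]$ using $|rs|\le\delta_\alpha$ and $|\tau|\le\delta_\alpha$, and then choose $\delta_\alpha$ small enough that this interval lies in $(1-\epsilon,1+\epsilon)$.
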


\begin{proof} One could derive this from the implicit function theorem, but we might as well use our explicit
formula (\ref {explicitalpha}) for  $\alpha$.  For $\z =\x u_-(r)g(u) \in
S_\x(\delta_\alpha/t,\delta_\alpha)$ we have
$$
\alpha_\x'(\y ,s)= \frac 1{e^u(1-rs)^2},
$$
and since $|r|\le \delta_\alpha/t,\ |u|\le \delta_\alpha$ and $s\le  t$,
$$
\frac 1{e^\delta_\alpha (1+\delta_\alpha)^2} \le \alpha_\x'(\y ,s) \le \frac {e^\delta_\alpha}{(1-\delta_\alpha)^2}.
$$
Clearly we can choose $\delta_\alpha$ so that
$$
\left|\frac 1{e^{\delta_\alpha}(1+\delta_\alpha)^2}-1\right|<\epsilon, \quad  \left| \frac
{e^{\delta_\alpha}}{(1-\delta_\alpha)^2}-1\right|<\epsilon.
$$
\end{proof}

\goodbreak

\bigskip

Set $\delta=\inf(\delta_\X, \delta_f, \delta_\alpha)$, and $\eta= \omega_\X(W_\x(\delta, \delta, \epsilon\delta))$.

\begin{proposition}\label{EgorovAndBirhoff}
There exists a $\tilde{T}$ and a set $\Y\subset \X$ with $\omega_\X(\Y)>1-\eta$ such that for all
$T\ge \tilde{T}$ and all $\y\in\Y$ we have
$$
\left|\frac{1}{T}\int_{0}^{T}f(\y u_{+}(t))dt-\int_{\X } fd\omega_\X\right|<\epsilon
.$$
\end{proposition}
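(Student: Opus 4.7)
The name of the proposition is a tip-off: this is a straight Egorov-plus-Birkhoff argument, using that horocycle flow has already been shown to be ergodic in Theorem \ref{Hedlund}.

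My plan is first to apply the Birkhoff pointwise ergodic theorem to the measure-preserving flow $u_+$ acting on the probability space $(\X,\omega_\X)$. Because $u_+$ is ergodic and $f \in C_c(\X) \subset L^1(\X, \omega_\X)$, Birkhoff gives a full-measure set $\X_0 \subset \X$ on which
$$
A_T(\y) := \frac{1}{T}\int_0^T f(\y u_+(t))\,dt \longrightarrow \int_\X f\, d\omega_\X \quad \text{as } T \to \infty.
$$
This is pointwise, not uniform, convergence, so I next invoke Egorov's theorem on the finite measure space $(\X, \omega_\X)$: applied to the sequence of measurable functions $A_n$ (integer $n$) converging $\omega_\X$-a.e. to the constant $\int_\X f\,d\omega_\X$, Egorov produces a measurable set $\Y \subset \X$ with $\omega_\X(\X \setminus \Y) < \eta$ on which the convergence $A_n \to \int_\X f\,d\omega_\X$ is uniform. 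In particular, there is an integer $N_0$ such that $|A_n(\y) - \int_\X f\,d\omega_\X| < \epsilon/2$ for all integers $n \ge N_0$ and all $\y \in \Y$.

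The only small extra step is to go from integer $T = n$ to continuous $T$. For any $T \ge 1$, write $n = \lfloor T \rfloor$; then
$$
|T\,A_T(\y) - n\,A_n(\y)| \le (T - n)\sup|f| \le 1,
$$
so $|A_T(\y) - A_n(\y)| \le \frac{2\sup|f|}{n}$, which tends to $0$ uniformly in $\y$. Choosing $\tilde T$ large enough that both $\lfloor \tilde T \rfloor \ge N_0$ and $2/\lfloor \tilde T \rfloor < \epsilon/2$ then gives $|A_T(\y) - \int_\X f\,d\omega_\X| < \epsilon$ for every $T \ge \tilde T$ and every $\y \in \Y$, which is exactly the conclusion.

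There is no real obstacle here beyond invoking the two classical theorems in the right order; the content of the proposition is entirely in its use downstream, where the set $\Y$ of ``good starting points'' will be combined with the transverse rectangle $S_\x(\delta/t,\delta)$ and Lemmas \ref{horokeeptogether}, \ref{deltaf}, \ref{deltaalpha} to transfer equidistribution from a.e. $\y \in \Y$ to the specific $\x$ of interest. That is presumably why $\eta$ was defined as $\omega_\X(W_\x(\delta,\delta,\epsilon\delta))$: a box of this measure will later be shown to meet $\Y$, forcing some nearby point on an unstable transversal to lie in $\Y$ and inherit the Birkhoff average.
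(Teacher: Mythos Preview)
Your proof is correct and follows exactly the paper's approach: Birkhoff's ergodic theorem plus ergodicity (Theorem~\ref{Hedlund}) gives a.e.\ convergence of the time averages to $\int_\X f\,d\omega_\X$, and Egorov upgrades this to uniform convergence off a set of measure $<\eta$. Your extra step passing from integer $n$ to continuous $T$ is a welcome bit of care, since Egorov is usually stated for sequences; the paper applies Egorov directly to the continuous family $g_T$ without comment.
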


\begin{proof} By the ergodic theorem, the family of functions
$$
g_T(\y)= \frac{1}{T}\int_{0}^{T}f(\y u_{+}(t))dt
$$
converges almost everywhere as $T \to \infty$, and since horocycle flow is ergodic, it converges almost everywhere to $\int_{\X} fd\omega_\X$ (this is where we use Theorem \ref{Hedlund}). By Egorov's theorem, there exists a set $\Y$ of measure at least $1-\eta$ such that the $g_T$ converge uniformly on $\Y$; omitting a set of measure $0$ from $\Y$, the family $g_T$ converges uniformly to $\int_{\X} fd\omega_\X$ on $Y$. This is the assertion of Proposition \ref {EgorovAndBirhoff}.
\end{proof}

\bigskip
We can now choose
\begin{enumerate}
\item an $n_0$ such that $T_n>\tilde{T}$ for $n>n_o$.  Then for $n>n_0$ we can select:

\item a sequence $\y_n \in \Y \cap  W_\x(\delta/T_n, \delta, \epsilon\delta T_n)$. Indeed, we have
$$
\omega_\X(W_\x(\delta/T_n, \delta, \epsilon\delta T_n))= \eta,
$$
since it is the inverse image  of $W_{\x u_+(T_n)}(\delta, \delta,
\epsilon\delta)$ by the geodesic flow at time $T_n$.  We have
$$
\omega_\X(W_{\x u_+(T_n)}(\delta, \delta, \epsilon\delta))=\eta
$$
since  $\x u_+(T_n)\in \X_c$ and $\delta\le
\delta_\X$ . Geodesic flow for a fixed time is a measure-preserving diffeomorphism, so $W_\x(\delta/T_n, \delta, \epsilon\delta
T_n)$ must intersect $\Y$ which has volume $>1-\eta$.
\item sequences $\z_n\in S_\x(\delta/T_n, \delta)$ and $\epsilon'_n\le \epsilon$ such that
$\z_n u_+(\epsilon_n'\delta T_n)= \y_n$. This is just what it means to say $\y_n \in W_\x(\delta/T_n, \delta,
\epsilon\delta T_n)$.
\end{enumerate}
\bigskip
The organizing principle is now to write for $n>n_0$
\begin{align*}
&\left|\frac 1{T_n}\int_0^{T_n} f(\x u_+(t))dt- \int_\X f(\w) \omega_X(d\w)\right| \le  \\
&\qquad \left|\frac 1{T_n} \int_0^{T_n} f(\x u_+(t))dt- \frac 1T_n \int_0^{T_n} f\Bigl(\z_n u_+\bigl(\alpha_\x(\z_n,t)\bigr)\Bigr)\right|+\\
&\qquad \left|\frac 1T_n \int_0^{T_n} f\Bigl(\z_n u_+\bigl(\alpha_\x(\z_n,t)\bigr)\Bigr)- \frac 1{T_n} \int_0^{T_n} f\Bigl(\z_n
u_+\bigl(\alpha_\x(\z_n,t)\bigr)\Bigr) \alpha'_\x(\z_n,t)dt\right|+\\ &\qquad \left|\frac 1{T_n} \int_0^{\alpha_\x(\z_n, T_n)} f(\z_n u_+(s) )ds-
\frac 1{T_n} \int_0^{T_n} f(\z_n u_+(s))ds\right|+\\
&\qquad  \left|\frac 1{T_n} \int_0^{T_n} f(\z_n u_+(s))ds- \frac 1{T_n} \int_0^{T_n} f(\y_n u_+(s)) ds\right| +\\
&\qquad\left|\frac 1{T_n} \int_0^{T_n} f(\y_n u_+(s)) ds-\int_\X f(\w) \omega_\X(d\w)\right|.
\end{align*}

To get from the second summand on the right to the third, we use the change of variables formula, setting $s= \alpha_\x(\z_n,t))$:
$$
\int_0^{T_n} f\Bigl(\z_n u_+\bigl(\alpha_\x(\z_n,t)\bigr)\Bigr) \alpha'_\x(\z_n,t) dt= \int_0^{\alpha_\x(\z_n, T_n)} f(\z_n u_+(s) )ds.
$$

\medskip
Each of
the five terms above needs to be bounded in terms of
$\epsilon$.

\begin{enumerate}
\item
Since  $\delta<\delta_f$, we have $|f(\x u_+(t))-f(\z_n u_+(\alpha_\x(\z_n,t)))|<\epsilon$, so
$$
\left|\frac 1{T_n} \int_0^{T_n} f(\x u_+(s))ds- \frac 1T_n \int_0^{T_n} f(\z_n u_+(\alpha_\x(\z_n,t))) dt\right|<\epsilon.
$$
\item
Since $\delta < \delta_\alpha$, we have
\begin{align*}
&\left|\frac 1{T_n} \int_0^{T_n} f(\z_n u_+(\alpha_\x(\z_n,t))) dt- \frac 1{T_n} \int_0^{T_n} f(\z_n
u_+(\alpha_\x(\z_n,t))) \alpha'_\x(\z_n,t)dt\right|\\
&\le \frac 1{T_n}\int_0^{T_n} \sup |f|\ |1-\alpha'_\x(\z_n,t)| dt < \epsilon.
\end{align*}
\item
From $\delta<\delta_\alpha$,
so $|\alpha'-1|< \epsilon$,  we get that $(1-\epsilon)T_n<\alpha_\x(\z_n,T_n) < (1+\epsilon)
T_n$ and hence
$$
\left|\frac 1{T_n} \int_0^{\alpha_\x(\z_n, T_n)} f(\z_n u_+(s) )ds-
\frac 1{T_n} \int_0^{T_n} f(\z_n u_+(s))ds\right|<\epsilon.
$$
\item
The points $\z_n$ and $\y_n$ are on the same positive horocycle, a distance $\epsilon_{n}' T_n$ apart for some $\epsilon_{n}' \le
\epsilon$. This leads to

\begin{align*}
&\left|\frac 1{T_n} \int_0^{T_n} f(\z_n u_+(s))ds- \frac 1{T_n} \int_0^{T_n} f(\y_n u_+(s)) ds\right|\\
&=\left|\frac 1{T_n} \int_0^{T_n} f(\z_n u_+(s))ds- \frac 1{T_n} \int_{\epsilon_{n}' T_n}^{(1+\epsilon_n')T_n} f(\z_n u_+(s))
ds\right|\le \frac {2\epsilon T_n}{T_n} = 2\epsilon.
\end{align*}
\item Since $\y_n\in \Y$ and $T_n>\tilde{T}$, we have
$$
\left|\frac 1{T_n} \int_0^{T_n} f(\y_n u_+(s)) ds-\int_\X f(\w) \omega_\X(d\w)\right|<\epsilon.
$$
\end{enumerate}

\end{proof}
This ends the proof of theorem \ref{goodtimesthm}.\qquad  $\square$

\section{Proving equidistribution}  The  $T_n$ are chosen to be a sequence of times tending to infinity such that $\x g(T_n) \in
\X_c$.  Thus if $\X$ is compact, the sequence $T_n$ is an arbitrary sequence tending to infinity, and so equidistribution is proved
in that case. Moreover, clearly theorem \ref{goodtimesthm} shows that all non-periodic horocycles are dense in $\X$.  But it
doesn't quite prove that they are equidistributed when $\X$ is not compact; perhaps a horocycle could spend an undue amount
of time near some cusp, and we could choose a different sequence of times $T'_n$ also tending to infinity which would
emphasize the values of $f$ near that cusp. In fact, we will see in Section 9 that something like that does happen for random
walks on horocycles.

We will now show that this does not happen for the horocycle flow itself.

\begin{proposition} \label{classmeasures} Let $\nu$ be a probability measure on $\X$ invariant under the positive horocycle flow and such that
$\nu(\P_\X)=0$ .  Then $\nu=\omega_\X$.
\end{proposition}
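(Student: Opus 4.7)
The plan is to combine Theorem \ref{goodtimesthm} with Birkhoff's ergodic theorem applied to the measure $\nu$ itself, and show that the Birkhoff averages along positive horocycles converge $\nu$-almost everywhere to the constant $\int_\X f\, d\omega_\X$. It will then suffice to integrate this identity against $\nu$ to obtain $\int_\X f\, d\nu = \int_\X f\, d\omega_\X$ for every $f \in C_c(\X)$, which forces $\nu = \omega_\X$ by the Riesz representation theorem.

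To carry this out, I would fix $f \in C_c(\X)$ and apply the continuous-time Birkhoff ergodic theorem to the measure-preserving flow $U_+$ acting on the probability space $(\X, \nu)$. This gives a $U_+$-invariant function $f^* \in L^1(\nu)$ with $\int_\X f^*\, d\nu = \int_\X f\, d\nu$, such that for $\nu$-a.e.\ $\x \in \X$,
$$
\lim_{T \to \infty} \frac{1}{T} \int_0^T f(\x u_+(t))\, dt = f^*(\x).
$$
By hypothesis $\nu(\P_\X) = 0$, so the set of full $\nu$-measure on which this limit exists can be further intersected with $\X \setminus \P_\X$ without loss of measure.

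Now for each such $\x \notin \P_\X$, Theorem \ref{goodtimesthm} furnishes a sequence $T_n(\x) \to \infty$ along which the very same Birkhoff averages converge to $\int_\X f\, d\omega_\X$. Since the full-time limit exists and any subsequence must agree with it, we conclude $f^*(\x) = \int_\X f\, d\omega_\X$ for $\nu$-a.e.\ $\x$. Integrating against $\nu$ yields
$$
\int_\X f\, d\nu = \int_\X f^*\, d\nu = \int_\X f\, d\omega_\X.
$$
As $f \in C_c(\X)$ was arbitrary, $\nu = \omega_\X$.

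The only subtle point—and the one I would double-check—is that Theorem \ref{goodtimesthm} is being applied pointwise on a set of full $\nu$-measure rather than full $\omega_\X$-measure; this is legitimate precisely because its conclusion is stated for \emph{every} $\x \notin \P_\X$ (not merely almost every $\x$), and the hypothesis $\nu(\P_\X)=0$ guarantees that this exceptional set is also $\nu$-null. No further obstacle arises, since Birkhoff applied to $\nu$ already ensures that the $\x$-dependent sequence of good times can be upgraded to a genuine limit in $T$.
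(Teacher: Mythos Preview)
Your proof is correct and follows the same core idea as the paper's: for $\nu$-almost every $\x\notin\P_\X$ the Birkhoff limit exists, and Theorem \ref{goodtimesthm} pins down its value along a subsequence, hence everywhere. The one difference is that the paper first reduces to the case where $\nu$ is ergodic via ergodic decomposition, so that the Birkhoff limit is automatically the constant $\int_\X f\,d\nu$, and then picks a single typical point. You skip this reduction by working with the general Birkhoff limit $f^*$ and showing directly that it is $\nu$-a.e.\ equal to the constant $\int_\X f\,d\omega_\X$; this is a mild streamlining, since it avoids appealing to the disintegration of $\nu$ into ergodic components (and the attendant check that almost every ergodic component still gives $\P_\X$ measure zero). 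Both arguments hinge on exactly the point you flagged: Theorem \ref{goodtimesthm} applies to \emph{every} $\x\notin\P_\X$, so the hypothesis $\nu(\P_\X)=0$ is what makes it usable $\nu$-almost everywhere.
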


\begin{proof} Without loss of generality, we can assume that $\nu$ is ergodic for the positive horocycle flow since any
invariant probability measure is a direct integral of ergodic invariant probability measures $\nu_\alpha$ with $\nu_\alpha(\P_\X)=0$, so
uniqueness for such invariant ergodic measures implies uniqueness for all such invariant measures.  Choose $f\in C_c(\X)$, and let $\x\in \X$
be a typical point for $\nu$, i.e., a point of $\X-\P_\X$ such that
$$
\int_\X f d\nu = \lim_{t\to \infty} \frac 1t \int_0^t f(\x u_+(s)) ds.
$$
By the ergodic theorem, this is true of $\nu$-almost every point, so such points $\x$ certainly exist. Such a point is one for
which the horocycle flow is not periodic, so theorem \ref{goodtimesthm} asserts that there exists a sequence $t_n\to \infty$ such
that
$$
\int_\X f d\nu = \lim_{t\to \infty} \frac 1t \int_0^t f(\x u_+(s)) ds= \lim_{n\to \infty} \frac 1{t_n} \int_0^{t_n} f(\x u_+(s)) ds
 = \int_\X f d\omega_\X.
$$
Since this equality is true for every $f\in C_c(\X)$, we have $\nu=\omega_\X$.
\end{proof}

\vspace{1.5mm}

Now suppose that for some $\x\in \X-\P_\X$ and some $f\in C_c(\X)$, we do not have
$$
\int_\X f d\omega_\X = \lim_{t\to \infty} \frac 1t \int_0^t f(\x u_+(s)) ds.
$$

We can consider the set of probability measures $\nu_t$ defined by
$$
\int_\X f d\nu_t =  \frac 1t \int_0^t f(xu_+(s)) ds.
$$
On a non-compact space, the Riesz representation theorem says that set of Borel measures is the dual of the Banach space
$C_0(X)$ of continuous functions vanishing at $\infty$ with the sup norm.  The collection of probability measures
$\nu_t$ is a subset of the unit ball, and the unit ball  is compact for the weak topology. So if $\lim_{t\to\infty} \nu_t \ne \omega_\X$, there exists
a measure $\nu \ne \omega_\X$ and a sequence $t_i\to \infty$ such that
$$
\lim_{i\to \infty} \nu_{t_i} =\nu
$$
in the weak topology.

Clearly $\nu$ is invariant under the horocycle flow and ergodic. So it might seem that $\omega_\X\ne \nu$ contradicts
proposition \ref{classmeasures}. There is a difficulty with this argument when $\X$ is not compact.  In that case the
probability measures do not form a closed subset of the unit ball of $C_0(\X)^*$; consider for instance the measures $\delta(x-n)$
on $\R$; as $n\to \infty$ they tend to $0$ in the weak topology. Technically, the problem is that we can't evaluate measures on the
continuous function $1$, since this function doesn't vanish at infinity.

We need to show that $\nu$ is a probability measure.  This follows from proposition \ref{takelongtogetthere} below.  For $\rho \le 2$,
 let $\X^\rho \subset \X$ be the compact part of $\X$ in which all periodic horocycles have length $\ge \rho$.  So $\X_c=\X^2$\\

\begin{proposition} \label{takelongtogetthere} .  For any $\epsilon>0$, there exists $\rho>0$ such that for all $\x\in \X-\P_\X$ we have
$$
\lim_{t\to \infty} \frac 1t \int_0^t \one_{\X^\rho}(\x u_+(s)) ds >1-\epsilon.
$$
\end{proposition}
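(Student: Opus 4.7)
\textbf{Proof proposal for Proposition \ref{takelongtogetthere}.}

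The plan is to exploit the explicit rational form of the height function along a horocycle orbit inside a cusp neighborhood to obtain a uniform (in $\x$ and $T$) bound
$$
D(T)\;:=\;\int_{0}^{T}\one_{\X-\X^{\rho}}(\x u_{+}(s))\,ds\;\le\;2\sqrt{\rho/2}\,T,
$$
from which the proposition follows immediately by choosing $\rho$ small enough that $2\sqrt{\rho/2}<\epsilon$ (say $\rho<\epsilon^{2}/2$).

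First I would analyze a single excursion of the orbit into the standard neighborhood $N_{c}\cong 2\Z\backslash\{y\ge 1\}$ of a cusp $c$; the part of $(\X-\X^{\rho})$ inside $N_{c}$ corresponds to $\{y>2/\rho\}/2\Z$ (where the local periodic horocycle has length $<\rho$). Lifting the excursion to $\H$ and normalizing so that $c$ sits at $\infty$, represent its starting point by $g=\begin{pmatrix}a&b\\c&d\end{pmatrix}\in\PSL_{2}\R$. Right multiplication by $U_{+}^{t}$ gives a lifted orbit at height
$$
y(t)\;=\;\frac{1}{c^{2}+(ct+d)^{2}},
$$
a symmetric reciprocal quadratic in $t$. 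The hypothesis $\x\notin\P_\X$ forces $c\ne 0$, so $y(t)$ peaks at $t=-d/c$ with value $1/c^{2}$. A direct computation gives that the excursion spends time
$$
L(h)\;=\;\frac{2\sqrt{1/h-c^{2}}}{|c|}\qquad(c^{2}\le 1/h)
$$
at height $\ge h$, so its total length in $N_{c}$ is $L(1)$ and the length of its deep sub-excursion (inside $(\X-\X^{\rho})\cap N_{c}$) is $L(2/\rho)$. The crucial inequality is
$$
\frac{L(2/\rho)}{L(1)}\;=\;\sqrt{\frac{\rho/2-c^{2}}{1-c^{2}}}\;\le\;\sqrt{\rho/2},
$$
uniformly in the excursion parameter $c$ and in the choice of cusp.

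Second, I would aggregate over excursions. Because $y(t)$ is symmetric about its peak, the deep sub-interval of each excursion sits centrally, flanked by equal-length non-deep entry and exit pieces. Thus if $T$ falls inside an excursion that began at time $t_{n}$ and has run for $u=T-t_{n}$, the elapsed deep time $D_{\rm cur}$ in that excursion satisfies
$$
\frac{D_{\rm cur}}{u}\;\le\;\frac{2\,L(2/\rho)}{L(1)+L(2/\rho)}\;\le\;2\sqrt{\rho/2},
$$
with the maximum reached just as $T$ exits the deep middle piece. Combining this with the first step's inequality applied to all previously-completed excursions, whose total in-cusp time is bounded by $T-u$, yields
$$
D(T)\;\le\;\sqrt{\rho/2}\,(T-u)+2\sqrt{\rho/2}\,u\;\le\;2\sqrt{\rho/2}\,T.
$$
Since distinct cusp neighborhoods $N_{c}$ are disjoint and the orbit is in at most one cusp at a time, the same argument runs cusp-by-cusp and the per-cusp bounds sum trivially.

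The main obstacle, and the reason for the factor $2$, is that while each \emph{completed} excursion has deep-to-cusp ratio $\le\sqrt{\rho/2}$, the running ratio $D_{\rm cur}/u$ during an incomplete excursion can be larger, and it takes the ``peak-in-middle'' observation above to control it uniformly in where $T$ happens to land. Once that running bound is established, the rest is bookkeeping, and choosing $\rho<\epsilon^{2}/2$ finishes the proof.
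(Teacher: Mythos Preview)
Your argument is correct and follows essentially the same route as the paper: both compute, for a single cusp excursion, the ratio of time spent in the deep region $N^{\rho}$ to time spent in the full cusp neighborhood $N^{2}$, obtain a bound of order $\sqrt{\rho/2}$, and conclude by choosing $\rho$ small. The paper carries this out in the equivalent ``cusp at $0$'' model (horocycle as a horizontal line $y=\epsilon$ crossing tangent discs) and is somewhat informal about the boundary effect of a partially completed excursion at time $T$; your ``peak-in-middle'' running-ratio bound with the factor $2$ handles that point more carefully than the paper does.
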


\begin{figure}[!ht]
\begin{centering}
\includegraphics[scale = 0.5] {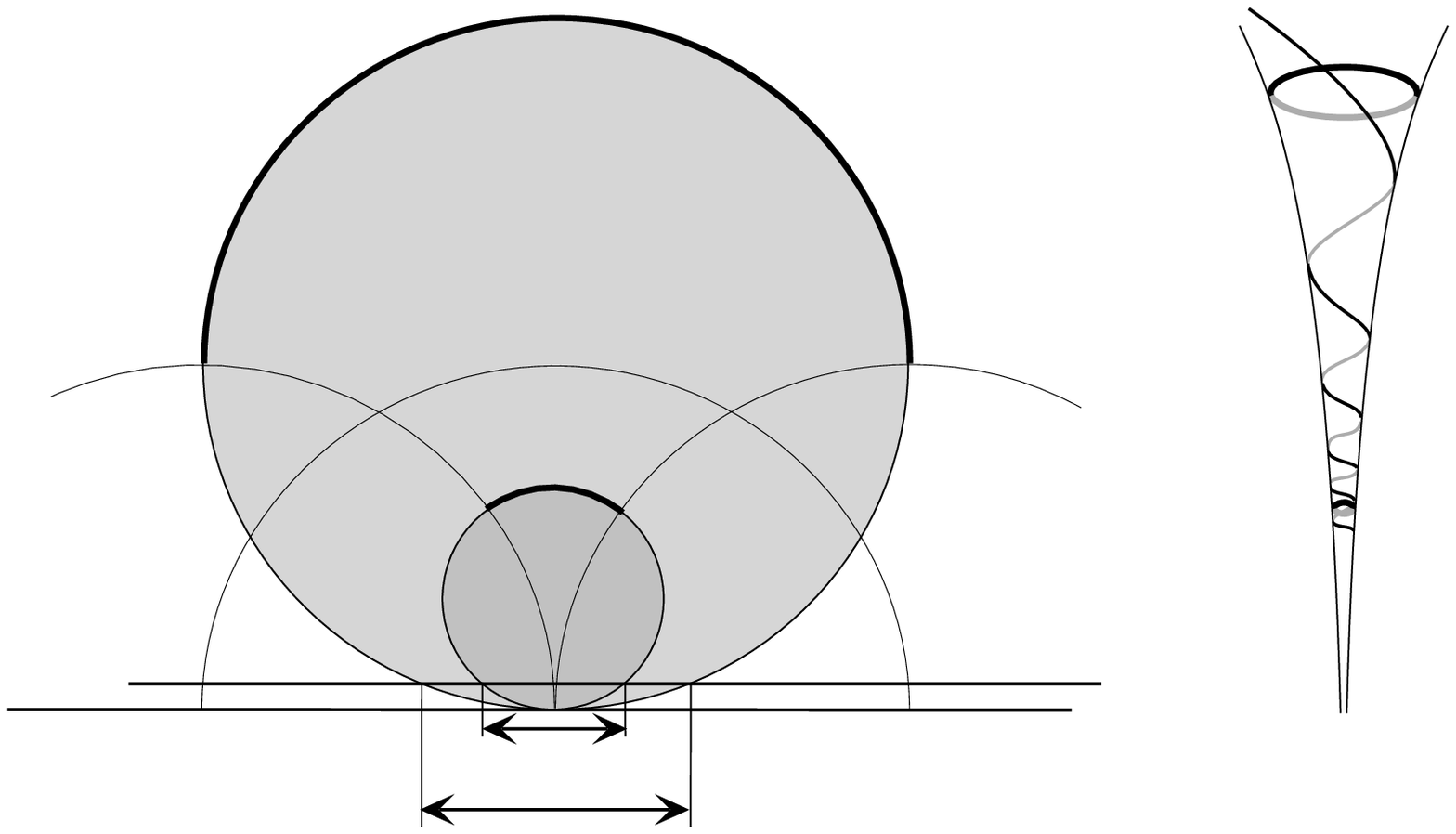}
\put (-162,-10){\scriptsize{$\sqrt{2\epsilon-\epsilon^2}$}}
\put (-162,4){\scriptsize{$\sqrt{\rho\epsilon-\epsilon^2}$}}
\put (-147, 120){\scriptsize 2}
\put (-147, 48){\scriptsize{$\rho$}}
\put (-147, 120){\scriptsize 2}
\put (-147, 120){\scriptsize 2}
\put (-55,19){\scriptsize {$\epsilon$}}
\put (-10, 49){\scriptsize{$\rho$}}
\caption{\label{enteringandleavinghorocycle} In $H$ a neighborhhod of a cusp bounded by a horocycle corresponds to a
disc tangent to the $x$-axis. In the figure on the left, we have represented the cusp by $\langle \gamma\rangle \backslash H$;
without loss of generality we may set $\gamma(z) = z/(1+z)$. Then the disc of radius $1$ centered at $i$ corresponds to the
neighborhood of the cusp bounded by the horocycle of length $2$, and the disc of radius $\rho/2$ centered at $i\rho/2$ corresponds
to the neighborhood bounded by a horocycle of length $\rho$. A horocycle that enters this neighborhood but does not go to the cusp
can be, without loss of generality, represented by a line of equation $y=\epsilon$; it goes deeper and deeper into the cusp as
$\epsilon\to 0$.  The ratio of times spent in $N^\rho$ to the time spent in $N^2-N^\rho$ does not become large as the horocycle goes
deeper in the cusp, but tends to a ratio depending only on $\rho$, which tends to $0$ as $\rho$ tends to $0$.  As horocycles go
deeper and deeper in the cusp, they spiral more and more tightly in $N^2-N^\rho$ and still spend approximately the same fraction of time in $N^2-N^\rho$ as in $N^\rho$.}
\end{centering}
\end{figure}

\begin{proof} If $c$ is a cusp of $X$, let  $N_c^\rho$ be the neighborhood of $c$ bounded by the horocycle of length $\rho$.
Recall from our discussion of the geometry of hyperbolic surfaces, that $N^2_c$ is isometric to a standard object: the part of $(2\Z)\backslash \H$ where $y>1$. Set $\gamma$ to be the Moebius transformation $\gamma(z)=z/(z+1)$; the standard neighborhood is then isometric to the part of
$\langle \gamma \rangle\backslash \H$ where $x^2+(y-1)^2\le 1$. Moreover, any horocycle that doesn't tend to the cusp is equivalent
by a change of variable commuting with $\gamma$ to a horizontal line. Of course lengths on such a horizontal line $y=\epsilon$
depend on $\epsilon$, but ratios of lengths are the same as ratios of euclidean lengths.

A careful look at figure
\ref{enteringandleavinghorocycle} shows that if a horocycle starts in
$\X_c$, goes deep in the cusp, and comes out again, then the ratio of time spent in $N^\rho$ to time spent in $N^2-N^\rho$ is
\begin{equation}
\frac{\sqrt{\rho\epsilon-\epsilon^2}}{\sqrt{2\epsilon-\epsilon^2}-\sqrt{\rho\epsilon-\epsilon^2}}= \frac{\sqrt \rho}{\sqrt 2-\sqrt
\rho} + O(\epsilon).
\end{equation}
Any non-periodic horocycle will eventually enter $\X_c$; by taking $\rho$ sufficiently small, we can assure that afterwards
it will spend a proportion of its time $<\epsilon$ outside of $X^\rho$.  Proposition \ref{takelongtogetthere} follows.
\end{proof}

\vspace{2mm}

Consider the measures
$$
\nu_{\x,T} = (f \mapsto \frac 1T \int_0^T f(\x u_+(t) dt).
$$

\begin{proposition} The accumulation set of $\{\nu_{\x,T}, T>0\}$ consists entirely of probability measures.
\end{proposition}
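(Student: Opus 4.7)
The plan is to rule out loss of mass to infinity by invoking Proposition \ref{takelongtogetthere}. Let $\nu$ be in the accumulation set, so there is a sequence $T_i\to\infty$ with $\nu_{\x,T_i}\to \nu$ in the weak-$\ast$ topology on $C_0(\X)^\ast$. Since each $\nu_{\x,T_i}$ is a positive functional, so is its weak-$\ast$ limit $\nu$, and by Riesz representation $\nu$ corresponds to a positive regular Borel measure of total mass $\nu(\X)=\|\nu\|_{C_0(\X)^\ast}$. Lower semicontinuity of the norm under weak-$\ast$ limits gives $\nu(\X)\le 1$ automatically, so the whole task is to prove $\nu(\X)\ge 1$.

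Fix $\epsilon>0$. Proposition \ref{takelongtogetthere} supplies $\rho>0$ (depending only on $\epsilon$) such that
$$
\lim_{T\to\infty} \nu_{\x,T}(\X^\rho) = \lim_{T\to\infty}\frac{1}{T}\int_0^T \one_{\X^\rho}(\x u_+(s))\,ds > 1-\epsilon,
$$
so $\nu_{\x,T_i}(\X^\rho) > 1-\epsilon$ for all sufficiently large $i$. Since $\X^\rho$ is compact in $\X$, Urysohn's lemma provides $g_\rho\in C_c(\X)$ with $0\le g_\rho\le 1$ and $g_\rho\equiv 1$ on $\X^\rho$. Testing $g_\rho\in C_0(\X)$ against the weak-$\ast$ convergence,
$$
\int_\X g_\rho\, d\nu = \lim_{i\to\infty}\int_\X g_\rho\, d\nu_{\x,T_i} \ge \liminf_i \nu_{\x,T_i}(\X^\rho) \ge 1-\epsilon.
$$
Since $0\le g_\rho\le 1$ and $\nu$ is positive, the left side is at most $\nu(\X)$, so $\nu(\X)\ge 1-\epsilon$; letting $\epsilon\downarrow 0$ gives $\nu(\X)=1$, i.e.\ $\nu$ is a probability measure.

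The one genuine subtlety is the gap between the weak-$\ast$ topology on $C_0(\X)^\ast$ and the space of probability measures on a non-compact $\X$: mass can in principle escape to infinity, as the authors flag with the $\delta(x-n)$ example. Proposition \ref{takelongtogetthere} is exactly the tool designed to block that escape, by giving uniform control (for $T$ large) on how much time the orbit $\{\x u_+(s)\}_{0\le s\le T}$ can spend in deep cusp neighborhoods; the compactness of $\X^\rho$ then lets us approximate $\one_{\X^\rho}$ from below by a function in $C_c(\X)$ and convert the pointwise-in-time estimate into a weak-$\ast$ lower bound on $\nu(\X)$. Once this is in place, no step is harder than bookkeeping, and the proof reduces essentially to Proposition \ref{takelongtogetthere} plus the Riesz-representation viewpoint already introduced before the statement.
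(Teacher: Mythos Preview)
Your proof is correct and follows essentially the same approach as the paper: both invoke Proposition~\ref{takelongtogetthere} to find a compact $\X^\rho$ capturing at least $1-\epsilon$ of the orbit-time mass, then test the weak-$\ast$ convergence against a Urysohn function $g_\rho\in C_c(\X)$ with $\one_{\X^\rho}\le g_\rho\le 1$ to deduce $\nu(\X)\ge 1-\epsilon$. The only cosmetic difference is that the paper phrases the lower bound via $\liminf_{T\to\infty}$ rather than passing to the subsequence $T_i$, which amounts to the same thing.
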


\begin{proof} Every accumulation point $\mu$ of the $\nu_{\x,T}$ in $C_0(\X)^*$ is a measure, and the only thing to show is that
$\mu(\X)=1$.  Clearly $\mu(\X)\le 1$, since for any $f\in C_0(X)$ and any $\x,T$ we have
$$
\frac 1T \int_0^T f(\x u_+(t)) dt\le \|f\|_\infty.
$$

To see that $\mu(\X)\ge 1$, take $\epsilon>0$ and  $\rho$ as in proposition \ref{takelongtogetthere}. We can then find a function
$f\in C_0(\X)$ which coincides with $\one_{\X^\rho}$ on $\X^\rho$ and satisfies $0\le f \le 1$ everywhere.
Then
\begin{align}
\mu(\X)&= \sup_{g\in C_0(\X)} \frac{\int_\X |g d\mu|}{\|g\|_\infty}\\
&\ge \int_\X f d\mu \ge \liminf_{T\to \infty} \frac 1T \int_0^T f(\x u_+(t)) dt \\
&\ge \liminf_{T\to \infty} \frac 1T \int_0^T \one_{\X^\rho}(\x u_+(t)) dt > 1-\epsilon.
\end{align}
\end{proof}

\vspace{2mm}

There is one last thing to check.

\begin{proposition} A measure $\mu$ in the limit set of $\{\nu_{\x,T}, T>0\}$ with $\x\notin \P_\X$ satisfies $\mu(\P_\X)=0$.
\end{proposition}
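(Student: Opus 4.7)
The plan is to combine the $U_+$-invariance of $\mu$ with the classification of $U_+$-ergodic probability measures coming from Proposition \ref{classmeasures}, and then derive a contradiction from the polynomial divergence of horocyclic orbits. First, $\mu$ is $U_+$-invariant: for $f\in C_0(\X)$ and $s\in\R$,
$$
\nu_{\x,T}(f\circ u_+(s))-\nu_{\x,T}(f) = \frac{1}{T}\Bigl[\int_T^{T+s}f(\x u_+(r))dr - \int_0^s f(\x u_+(r))dr\Bigr] = O(s\|f\|_\infty/T),
$$
which vanishes in the weak-$*$ limit along the defining subsequence $t_i\to\infty$.

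Next, I classify the $U_+$-ergodic probability measures on $\X$. By Proposition \ref{classmeasures}, every such ergodic $\nu$ either has $\nu(\P_\X)=0$ and so equals $\omega_\X$, or has $\nu(\P_\X)=1$ (by the $0/1$-law applied to the $U_+$-invariant set $\P_\X$) and, since the $U_+$-orbits in $\P_\X$ are closed and pairwise disjoint, must be supported on a single periodic horocycle $\gamma$ and equal the normalized arc-length $m_\gamma$. Ergodic decomposition of $\mu$ then reads
$$
\mu = a\,\omega_\X + \int m_\gamma\,d\lambda(\gamma)
$$
for some $a\geq 0$ and some finite positive measure $\lambda$ on the set of periodic horocycles, with $a+\lambda(\mathrm{total})=1$ and $\mu(\P_\X)=\lambda(\mathrm{total})$. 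The proposition thus reduces to showing $\lambda=0$.

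Suppose for contradiction $\lambda\neq 0$. By Proposition \ref{takelongtogetthere}, $\mu(\X^\rho)\geq 1-\epsilon$ for any $\epsilon>0$ and a suitable $\rho>0$; combined with the fact that arc-length measures on periodic horocycles of length $\geq\rho$ are supported in $\X^\rho$, this yields a compact $\lambda$-positive family $A$ of periodic horocycles of length $\geq\rho$ with $K=\bigcup_{\gamma\in A}\gamma\subset\X^\rho$ and $\mu(K)=\lambda(A)>0$. For any open $\delta$-neighborhood $V_\delta(K)$, weak-$*$ lower semicontinuity gives
$$
\liminf_i \nu_{\x,t_i}(V_\delta(K))\;\geq\;\mu(V_\delta(K))\;\geq\;\mu(K)=\lambda(A)>0,
$$
so the non-periodic orbit $\{\x u_+(t)\}$ asymptotically spends at least a fraction $\lambda(A)$ of its time in the arbitrarily thin tube $V_\delta(K)$. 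On the other hand, the polynomial-divergence property of horocyclic orbits — quantified by Lemma \ref{explicitalpha}, in which the synchronization shift $\alpha_\x(\y,s)-s$ grows quadratically in $s$ for a transversally-close $\y=\x u_-(r)g(q)$ — implies that the non-periodic orbit cannot sustain proximity to a periodic one across arbitrarily thin tubes; a careful accounting of the durations and frequencies of visits to $V_\delta(K)$ then forces the asymptotic fraction of time in $V_\delta(K)$ to tend to $0$ with $\delta$, contradicting the lower bound $\lambda(A)>0$. Hence $\lambda=0$ and $\mu(\P_\X)=0$.

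The main obstacle is the polynomial-divergence bookkeeping in the last step: one must turn the quadratic growth rate of $\alpha_\x(\y,s)-s$ into a uniform upper bound on the asymptotic fraction of time that $\{\x u_+(t)\}$ spends in a $\delta$-tube around the compact union $K$ of periodic horocycles, uniformly in $\delta$ and in the particular horocycles in $K$. This requires controlling both the length of each individual visit of the orbit to the tube and the spacing between successive visits.
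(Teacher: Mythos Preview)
Your setup (invariance of $\mu$, ergodic decomposition into $\omega_\X$ and arc-length measures on periodic horocycles) is correct, but the final step is a genuine gap, and you yourself flag it as the ``main obstacle.'' You assert that polynomial divergence forces the fraction of time spent in $V_\delta(K)$ to tend to $0$ with $\delta$, but you do not carry this out. The difficulty is real: Lemma~\ref{explicitalpha} controls how long a \emph{single} visit of $\x u_+(\cdot)$ to the $\delta$-tube around \emph{one} periodic horocycle can last, but it gives you nothing about how frequently the orbit returns, and nothing about visits to a compact \emph{family} of periodic horocycles. Turning the quadratic divergence into an honest density bound requires either a non-trivial recurrence/return-time estimate or a renormalization argument, neither of which you supply. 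As written, the contradiction is asserted, not established.

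The paper's argument avoids all of this bookkeeping via a short renormalization trick. Suppose $\mu(\P_\X)>0$, set $\epsilon=\mu(\P_\X)/3$, and pick $\rho$ from Proposition~\ref{takelongtogetthere}. Take a compact $\Q\subset\P_\X$ with $\mu(\Q)>\tfrac23\mu(\P_\X)$; since $\P_\X$ is the union of stable manifolds of cusps, geodesic flow for some fixed time $T$ pushes $\Q$ (and a neighborhood $\U$ of it) entirely out of $\X^\rho$. Now the commutation relation $g(T)u_+(t)=u_+(e^{-T}t)g(T)$ means the horocycle through $\x g(T)$ is just a rescaling of the $g(T)$-image of the horocycle through $\x$, so it spends the same \emph{proportion} of time in $\U g(T)\subset \X\setminus\X^\rho$ as the original orbit spends in $\U$, namely more than $\tfrac12\mu(\Q)>\epsilon$. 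But $\x g(T)\notin\P_\X$, so Proposition~\ref{takelongtogetthere} says this proportion is at most $\epsilon$: contradiction. No ergodic decomposition, no classification of ergodic measures, and no divergence estimate are needed; the whole burden is carried by Proposition~\ref{takelongtogetthere} together with the geodesic renormalization of horocycle flow.
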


\begin{proof} Suppose $\mu(\P_\X) >0$, set $\epsilon= \mu(\P_\X)/3$ and use proposition \ref{takelongtogetthere} to find a
corresponding $\rho$. Find a compact subset $\Q\subset \P_\X$ with $\mu(\Q)>\frac 23 \mu(\P_\X)$, and find a time $T$ such that
$$
\Q g(T) \cap \X^\rho=\emptyset.
$$
This is possible because $\P_\X$ consists of points in the stable manifolds of the cusps, so each point can be moved off $\X^\rho$,
and since $\Q$ is compact it will leave $\X^\rho$ under the geodesic flow at some time $T$.

Let $\U$ be a neighborhood of $\Q$ such that $\U g(T)\cap \X^\rho=\emptyset$. For this neighborhood $\U$ of $\Q$, as for any
neighborhood, there exists a sequence of times
$T_n
\to
\infty$ such that
$$
\frac{\lambda\{t\in [0,T_n]\ |\ \x u_+(t)\in \U\} }{T_n} > \frac 12 \mu (\Q),
$$
where $\lambda$ is linear measure. Then the horocycle $t\mapsto \x g(T) u_+(t)$ must spend the same proportion of its time in $\U
g(T)$, hence outside
$\X^\rho$.  But every non-periodic horocycle spends at least a proportion $1-\mu(\P_X)/3$ in $\X-\X^\rho$, and this is a contradiction.
\end{proof}

\section{Horocycle flow on the modular surface}  \indent Let $\Gamma$ be the 2-congruence subgroup of $\SL_2(\R)$, so that $\X= \Gamma\backslash\SL_2\R$ is the unit tangent bundle over $X=\Gamma\backslash H$, which is the 3-times punctured sphere.  Denote by $\pi_\X : \H \to \X$ and $\pi_X :\X \to X$ respectively the projections from  $\H \cong \PSL(2,\R)$ onto $\X \cong \Gamma \backslash \PSL(2,\R)$ and from $\X$ onto $X$.

\begin{lemma} \label{sizeofcusps} The hyperbolic surface $X$ has area $2\pi$, and the subset $X- X^\rho$ has area $3\rho$. \end{lemma}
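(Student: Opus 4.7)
The plan is to compute $\operatorname{area}(X)$ by comparison with the modular surface $\PSL_2\Z\backslash H$, and to compute $\operatorname{area}(X-X^\rho)$ by applying the standard cusp-neighborhood model from Section~6 to each of the (soon-to-be-counted) three cusps.

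For the area of $X$: since $-I\equiv I\pmod{2}$, the subgroup $\Gamma=\Gamma(2)$ contains $-I$, so its image in $\PSL_2\Z$ has index equal to $|\SL_2(\Z/2\Z)|=6$. Integrating the hyperbolic area form over the classical fundamental domain $\{|z|\ge 1,\ |\operatorname{Re} z|\le 1/2\}$ for $\PSL_2\Z$ gives
\[
\int_{-1/2}^{1/2}\int_{\sqrt{1-x^2}}^{\infty} \frac{dy\,dx}{y^2}=\int_{-1/2}^{1/2}\frac{dx}{\sqrt{1-x^2}}=\frac{\pi}{3},
\]
so $\operatorname{area}(X)=6\cdot\pi/3=2\pi$.

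For the number of cusps: the cusps of $X$ are the $\Gamma$-orbits on $\mathbb{P}^1(\mathbb{Q})=\SL_2\Z\cdot\infty$, in bijection with $\Gamma\backslash\SL_2\Z/\operatorname{Stab}_{\SL_2\Z}(\infty)$. Reducing modulo $2$, this becomes $\SL_2(\Z/2\Z)/\overline{\operatorname{Stab}(\infty)}$; since $\operatorname{Stab}(\infty)=\{\pm U_+^n\}$ has a $2$-element image in $\SL_2(\Z/2\Z)$, the number of cosets is $6/2=3$ (equivalently, $X$ is topologically the thrice-punctured sphere, with representative cusps $\infty,0,1$).

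For the area of each cusp neighborhood: Section~6 supplies an isometry $N_c\cong 2\Z\backslash\{y\ge 1\}$. Under this identification the closed horocycles are the horizontal lines $y=y_0$, each of hyperbolic length $2/y_0$, so the horocycle of length $\rho\le 2$ sits at height $y_0=2/\rho$, and
\[
\operatorname{area}(N_c^\rho)=\int_0^{2}\int_{2/\rho}^{\infty} \frac{dy\,dx}{y^2}=\rho.
\]
Summing over the three disjoint cusp neighborhoods gives $\operatorname{area}(X-X^\rho)=3\rho$. The only step with any real content is the coset count establishing exactly three cusps; both area integrals are routine.
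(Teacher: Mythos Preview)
Your proof is correct. The paper itself leaves this lemma to the reader, so there is no proof to compare against; your argument is a standard and complete way to fill in the details, using the index computation $[\PSL_2\Z:\overline{\Gamma(2)}]=6$ together with the area $\pi/3$ of the modular surface for the total area, and the explicit cusp model $2\Z\backslash\{y\ge 1\}$ from Section~6 for the area of each $N_c^\rho$. Note that the paper already asserts that $X$ is the thrice-punctured sphere, so your coset count for the number of cusps, while correct, is not strictly necessary here.
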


We leave the proof of this lemma to the reader.

It follows from lemma \ref{sizeofcusps} that for every $\x_0 \notin \P_\X$ there exists for every sequence $\rho_n \to 0$, for  every $\epsilon>0$ and for
every $n$ sufficiently large, a time $$ T_n< (1+\epsilon) \left(\frac{2\pi}{3\rho_n}\right) $$ such that $\pi_X(\x_0u_+(T_n))\in X- X^{\rho_n}$.

To use this result, we need to understand the region in $H$ corresponding to $X^\rho$.

\begin{lemma}\label{sizeofhodiscs} The inverse image in $H$ of $X-X^\rho$ is the union of the horodisc $\im z>2/\rho$, and the union, for all rational
numbers $p/q$ of the discs of radius $\rho/(4q^2)$ tangent to the real axis at $p/q$. \end{lemma}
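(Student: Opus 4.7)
The plan is to reduce the general case to the cusp at $\infty$ using three ingredients: the decomposition of $\mathbb{Q}\cup\{\infty\}$ into $\Gamma(2)$-orbits, an explicit Möbius transformation sending each rational to $\infty$, and the normality of $\Gamma(2)$ in $\SL_2(\Z)$.

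First I would observe that, for $\rho\le 2$, the set $X-X^\rho$ is the disjoint union of the three cusp-neighbourhoods $N_{[\infty]}^\rho$, $N_{[0]}^\rho$, $N_{[1]}^\rho$, so its preimage in $H$ is the union of the individual horodiscs at each point of the three $\Gamma(2)$-orbits of $\infty$, $0$, $1$. Next I would identify those orbits: since $\Gamma(2)=\ker(\SL_2(\Z)\to\SL_2(\Z/2))$ and $\SL_2(\Z)$ acts transitively on $\mathbb{Q}\cup\{\infty\}$, the $\Gamma(2)$-orbits are exactly the fibres of the reduction map $\mathbb{P}^1(\mathbb{Q})\to\mathbb{P}^1(\Z/2)$, giving three classes that partition $\mathbb{Q}\cup\{\infty\}$ according to the parity of $(p,q)$ written in lowest terms.

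Next I would compute the radius at each cusp. For $\infty$, the stabiliser in $\Gamma(2)$ is $\langle z\mapsto z+2\rangle$ (because $z\mapsto z+1\notin\Gamma(2)$), so the horocircle at Euclidean height $h$ has hyperbolic length $2/h$, and $2/h=\rho$ yields the horodisc $\im z>2/\rho$. For a general rational $p/q$ with $\gcd(p,q)=1$ and $q\ge 1$, I would pick $M\in\SL_2(\Z)$ of the form $\bigl(\begin{smallmatrix}p&b\\ q&d\end{smallmatrix}\bigr)$, so that $M\cdot\infty=p/q$, and carry out a short direct computation (writing $w=z-p/q$ and $M^{-1}(z)=-d/q-1/(q^2 w)$) to obtain
\[
\im(M^{-1}(z))=\frac{\im z}{q^2\,|z-p/q|^2},
\]
from which $M^{-1}$ carries the Euclidean circle $|z-p/q|^2=2R\,\im z$ onto the horizontal line $\im z=1/(2Rq^2)$. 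Then, invoking normality of $\Gamma(2)$ in $\SL_2(\Z)$, I would conclude that $M^{-1}\,\mathrm{Stab}_{\Gamma(2)}(p/q)\,M=\mathrm{Stab}_{\Gamma(2)}(\infty)=\langle z\mapsto z+2\rangle$, so the relevant translation period at the new cusp is again $2$; hence the horocircle at $p/q$ of Euclidean radius $R$ has hyperbolic length $4Rq^2$ in $X$, and $4Rq^2=\rho$ gives $R=\rho/(4q^2)$.

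The main obstacle is keeping track of the correct translation period: naively using the $\SL_2(\Z)$-period $1$ at $\infty$ would give a radius off by a factor of $2$. Normality of $\Gamma(2)$ in $\SL_2(\Z)$ is exactly what guarantees the period is uniformly $2$ at every cusp, so a single formula covers all rationals simultaneously.
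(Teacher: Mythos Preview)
Your argument is correct. The paper, however, does not supply a proof of this lemma at all: it is stated and then immediately used, in the same spirit as the preceding Lemma on the area of $X-X^\rho$, which is explicitly left to the reader. So there is no ``paper's own proof'' to compare against; your write-up simply fills in what the authors omit.

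Your route---identifying the $\Gamma(2)$-orbits on $\mathbb{P}^1(\mathbb{Q})$ via the reduction map, transporting each horodisc at $p/q$ to $\infty$ by an element of $\SL_2(\Z)$, and invoking normality of $\Gamma(2)$ to pin down the translation period as $2$ at every cusp---is the natural one and matches the conventions the paper sets up in Section~6 (the standard cusp neighborhood $2\Z\backslash\{y\ge 1\}$). The explicit computation $\im(M^{-1}z)=\im z/(q^2|z-p/q|^2)$ and the resulting length $4Rq^2$ are correct, and your warning about the factor-of-two trap (using the $\SL_2(\Z)$ period $1$ instead of the $\Gamma(2)$ period $2$) is exactly the point one has to get right. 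Nothing is missing.
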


Choose $\alpha \in \R-\Q$, and consider the  horocycle in $\X$ which is the image of the horocycle $\z_0 u_+(t)$ in $\H$, where $\z_0=\alpha+2i$, i.e., the image of the horocycle represented by the circle of radius 1 tangent to the real axis at the irrational number $\alpha$.  A straightforward computation shows that
$$
\z_0 u_+(T)=\left(\alpha-\frac{2T}{T^2+1}\right)+i\frac 2{T^2+1}.
$$

Set $\rho_n=1/n$. This horocycle is not periodic, so it must enter $\X-\X^{\rho_n}$ at a sequence of times $T_n<(1+\epsilon)\left(\frac{2\pi n}3\right)$. Interpreting the cusps as rational numbers, this means that there  exists an infinite sequence of rational numbers $p_n/q_n$ and times $T_n<(1+\epsilon)\frac{2\pi n}{3}$ such that 
$$
\left|\alpha-\frac{p_n}{q_n}\right|= \frac {2T_n}{T_n^2+1}\le \frac {T_n}{2nq_n^2}<(1+\epsilon)\frac{\pi n}{3nq_n^2} = (1+\epsilon)\frac{\pi}{3q_n^2}. 
$$

This is of course nothing to boast about.  It has been known for over 100 years that for every irrational number $\alpha$, there exist infinitely many
coprime numbers  numbers $p_n,\ q_n$ such that $$ \left|\alpha-\frac {p_n}{q_n}\right| < \frac 1{\sqrt 5 q_n^2} , $$ and that $1/\sqrt 5$ is the smallest
number for which this is true \cite{Kinchin64}.  Our analysis only gives the constant $\pi/3$, too large by a factor of more than  2.

One reason to take an interest in this result despite its weakness is that Ratner's theorem has many generalizations to situations where methods leading to
the  sharp results about diophantine approximations of irrational numbers are not available.  In all settings, Ratner's theorem has ``diophantine''
consequences.

Clearly we cannot do better than improve the constant for all horocycles. But we can use the theory of diophantine approximations to improve the results
above for almost every horocycle. In particular we can apply the following theorem.

\begin{theorem}\label{almostalldioph} \textrm{\cite{Kinchin64}} If $ g(x): \R_+^* \to \R$ is a function such that $g(x)/x$ is increasing, then for almost every $\alpha$, there exist infinitely many coprime integers $p,q$ such that $ |\alpha-\frac  pq| <\frac 1{qg(q)}$ if and only if the series $$ \sum_{n=1}^\infty \frac 1{g(n)} $$ diverges. \end{theorem}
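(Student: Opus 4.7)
Writing $\mu$ for Lebesgue measure on $[0,1]$ (the statement is translation-invariant, so there is no loss in restricting $\alpha$ there), I plan to study, for each positive integer $q$, the set
\[
E_q = \left\{\alpha \in [0,1] \ \Big|\ \exists\, p\ \text{with}\ \gcd(p,q)=1\ \text{and}\ \left|\alpha - \tfrac{p}{q}\right| < \tfrac{1}{q\,g(q)}\right\},
\]
and to decide when $\alpha\in E_q$ holds for infinitely many $q$. The question splits into the two halves of a Borel--Cantelli analysis, driven by the quantities $\mu(E_q)$ and the pairwise intersections $\mu(E_q\cap E_{q'})$. The elementary bound $\mu(E_q)\le 2\phi(q)/(q g(q))\le 2/g(q)$ is immediate, since $E_q$ is a union of at most $\phi(q)$ intervals of length $2/(q g(q))$.

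For the convergence direction, if $\sum 1/g(n)<\infty$ then $\sum_q \mu(E_q)<\infty$, and the first Borel--Cantelli lemma gives $\mu(\limsup_q E_q)=0$: almost every $\alpha$ fails the approximation property from some denominator on.

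For the divergence direction, the sets $E_q$ are far from independent, so the second Borel--Cantelli lemma does not apply verbatim. I would use the quantitative Chung--Erd\"os variant: if
\[
\sum_{q\le N}\mu(E_q)\to\infty \quad\text{and}\quad \sum_{q,q'\le N}\mu(E_q\cap E_{q'})\le C\left(\sum_{q\le N}\mu(E_q)\right)^2,
\]
then $\mu(\limsup_q E_q)>0$, which upgrades to full measure by a standard zero--one argument (the event is a tail event of the continued-fraction digits of $\alpha$, and the Gauss map is ergodic). The crucial quantitative input is that distinct reduced fractions $p/q\ne p'/q'$ satisfy $|p/q-p'/q'|\ge 1/(qq')$, so the interval overlaps that control $\mu(E_q\cap E_{q'})$ are small; the monotonicity of $g(x)/x$ is precisely the regularity needed to turn this bound into a diagonal-dominated covariance estimate (and to rule out Duffin--Schaeffer-type pathologies).

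The main obstacle is the covariance estimate itself: the combinatorial accounting of pairs $(p/q,p'/q')$ whose neighborhoods overlap must be made sharp enough to extract a usable constant $C$. If that computation becomes unwieldy, the cleaner alternative is Khinchin's original route via continued fractions. The convergents $p_n/q_n$ of $\alpha$ already satisfy $|\alpha-p_n/q_n|<1/(q_n q_{n+1})$, so the inequality $|\alpha-p/q|<1/(q g(q))$ holds at $(p,q)=(p_n,q_n)$ precisely when $q_{n+1}>g(q_n)$. Under the Gauss map the partial quotients $a_n$ are stationary with $\mu(a_n\ge k)\asymp 1/k$ and $q_{n+1}\asymp a_{n+1}q_n$, so a Borel--Cantelli argument applied to this dynamically generated sequence shows that $q_{n+1}>g(q_n)$ occurs infinitely often almost surely if and only if $\sum 1/g(n)$ diverges. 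The hypothesis that $g(x)/x$ is increasing is what lets us transfer between convergent denominators and arbitrary denominators $q$ without losing the equivalence between the two sides of the dichotomy.
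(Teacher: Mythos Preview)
The paper does not prove this theorem: it is quoted from Khinchin's book \cite{Kinchin64} and used as a black box, so there is no ``paper's own proof'' to compare against. That said, your outline is the standard route to Khinchin's theorem and is essentially sound.

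A few remarks on the details. The convergence half is correct as written. For the divergence half via Chung--Erd\H{o}s, note that your first-moment sum is $\sum_q 2\phi(q)/(q\,g(q))$, not $\sum_q 2/g(q)$; you need the extra step (partial summation using $\sum_{q\le N}\phi(q)/q\sim (6/\pi^2)N$, together with the monotonicity of $q/g(q)$) to see that this diverges exactly when $\sum 1/g(n)$ does. The overlap bound $|p/q-p'/q'|\ge 1/(qq')$ and the monotonicity hypothesis are indeed what make the second-moment estimate go through, and a zero--one law (Cassels' or Gallagher's, or the ergodicity argument you mention) then upgrades positive measure to full measure.

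For the continued-fraction alternative, be careful with the final equivalence: the events are ``$q_{n+1}>g(q_n)$'', with probability comparable to $q_n/g(q_n)$, and the index $n$ runs over convergents, whose denominators $q_n$ grow geometrically. So the relevant series is $\sum_n q_n/g(q_n)$, not $\sum_n 1/g(n)$, and you must explain (via Cauchy condensation, using that $q/g(q)$ is decreasing) why its divergence is equivalent to that of $\sum_k 1/g(k)$. This is exactly where the hypothesis that $g(x)/x$ is increasing earns its keep; without that bridge the argument as stated has a gap.
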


Let us see what this says about horocycles; we will specialize to the case where $g(n)=n\log(n)$.  For almost every $\x_0$, the horocycle $\x_0 U_+$ lifts to a horocycle $\z_0 U_+$ in $\H$ tangent to the $x$-axis at an irrational number $\alpha$ belonging to the set of full measure from theorem  \ref{almostalldioph}. Changing the time parameterization by a constant, we may assume that $\z_0= (\alpha +2iR,-2iR)$ since this is in any case the worst point on the horocycle $\z_0 U_+$.

\begin{figure}[!ht] \begin{centering} \includegraphics[scale = 0.5] {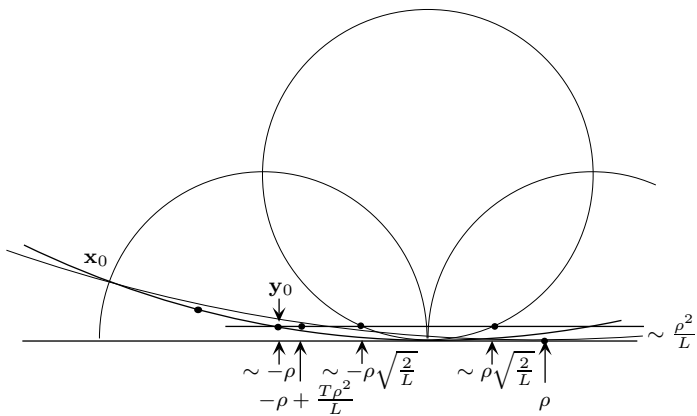} \put (-216,45){\scriptsize{$\x_0$}} \put (-146,34){\scriptsize{$\y_0$}}
\put (-44,-9){\scriptsize{$\rho$}} \put (-150,-9){\scriptsize{$-\rho+\frac {T\rho^2}L$}} \put (-156,2){\scriptsize{$\sim -\rho$}} \put
(-126,2){\scriptsize{$\sim -\rho\sqrt{\frac 2L}$}} \put (-75,2){\scriptsize{$\sim \rho\sqrt{\frac 2L}$}} \put (-3,16){\scriptsize{$\sim \frac{\rho^2}{L}$}}
\caption{\label{Breuillardhorocycle} We can lift the horocycle $\x_0 U_+$ to $\H$; without loss of generality we may assume that the cusp $c$ is at $0$,
and that the stabilizer of the cusp is generated by $z \mapsto z/(z+1)$. In that case, the horocycle of length $2$ lifts to the circle of radius $1$ centered at
$i$, and the horocycle of length $L$ lifts to the circle of radius $L/2$ centered at $Li/2$. We may take $x_0=\pi_X(\x_0)$ to be anywhere on this horocycle; it will be
convenient to place it at 
$\frac{-2L^2+4Li}{L^2+4}$.  In that case, one fundamental domain on the horocycle goes from $\frac{-2L^2+4Li}{L^2+4}$
to $\frac{2L^2+4Li}{L^2+4}$. Our modified horocycle will join $x_0$ to the point $\rho>0$ on the real axis. It is much easier to estimate lengths on this
horocycle if we send $\rho$ to infinity by a parabolic transformation that fixes $0$, and hence all the horocycles tangent to the real axis at $0$.  If we
perform this parabolic transformation, the point $x_0$ moves to a point $y_0$ on its horocycle which is approximately $-\rho +i \rho^2/L$, and the
horocycle is a horizontal line, approximately the line $y=\rho^2/L$.} \end{centering} \end{figure}

Let $p_n/q_n$ be one of the good approximations to $\alpha$ guaranteed by theorem \ref{almostalldioph}.  Let $\rho_n$ be the radius of the negative
horocycle surrounding the cusp corresponding to $p_n/q_n$ when the point $\z_0 u_+(T_n)$ is on the vertical line $x=p_n/q_n$. Further let us write
$$
\z_0 u_+(T_n)= \xi_n+i\eta_n= 2R\left(\frac {T_n}{T_n^2+1} +\frac i{T_n^2+1}\right).
$$

Then we have $T_n= \frac {\xi_n}{\eta_n}$ and $\eta_n= \frac {\rho_n}{2 q_n^2}$ and by lemma \ref{sizeofhodisks} it follows that
$$
T_n= \frac {\xi_n}{\eta_n} \le \frac{1/(q_n^2 \log q_n)}{\rho_n / (2 q_n^2)}= \frac {2}{\rho_n \log q_n}.
$$

Moreoever $\eta_n=R-\sqrt(R^2-\xi_n^2) \sim \frac{2 \xi_n^2}{R}$ from which we can derive that $q_n \sim \frac{1}{2 \log(\frac{1}{\rho_n})}$.\\

We have proved the following:
  
\begin{theorem} \label{betterthaneqd} On the modular surface, for every $\epsilon >0$ and for almost every horocycle $\x_o U_+$, there exists a sequence $\rho_n \rightarrow 0$ and times $T_n< (1+\epsilon)\frac{1}{\rho_n \log \frac{1}{\rho_n}}$ such that $\x_o u_+(T_n) \in \X-\X^{\rho_n}$.
\end{theorem}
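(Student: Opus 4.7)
The plan is to feed Khinchin's Theorem \ref{almostalldioph} with the choice $g(n)=n\log n$ (for which $\sum 1/(n\log n)$ diverges) into the geometric dictionary of Lemma \ref{sizeofhodiscs}. The resulting full-measure set of irrationals $\alpha$ admitting infinitely many coprime rational approximations $p_n/q_n$ with $|\alpha - p_n/q_n|<1/(q_n^2 \log q_n)$ pulls back to a full-measure set of horocycles in $\X$, and for each such horocycle I will track, for each $n$, the time $T_n$ at which the flow sits directly above the cusp image of $p_n/q_n$ and read off the corresponding $\rho_n$ from the cusp geometry.

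First I would lift $\x_0 U_+$ to a horocycle $\z_0 U_+$ in $\H$ tangent to the real axis at an irrational $\alpha$, parameterized as in Figure \ref{Breuillardhorocycle} so that $\z_0 u_+(T)= \alpha + \xi(T) + i\eta(T)$ with $\xi,\eta$ read off from the explicit flow formula. Along the Khinchin sequence $p_n/q_n$, define $T_n$ to be the unique time with $\xi(T_n)= p_n/q_n - \alpha$; this exists for all $n$ large enough that the approximation lies within the Euclidean radius of the horocycle, and by construction $|\xi(T_n)|< 1/(q_n^2 \log q_n)$.

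Next I apply Lemma \ref{sizeofhodiscs}: the point $\z_0 u_+(T_n)$ lies in the preimage of $\X-\X^{\rho_n}$ precisely when it lies inside the Euclidean disc of radius $\rho_n/(4q_n^2)$ tangent to the real axis at $p_n/q_n$. Since $\xi(T_n)$ is already normalized to $p_n/q_n$, this reduces to the single vertical inequality $\eta(T_n) \le \rho_n/(2q_n^2)$, so I \emph{define} $\rho_n := 2q_n^2\,\eta(T_n)$ and note $\rho_n \to 0$ as $q_n \to \infty$. The horocycle relation $\eta_n = R - \sqrt{R^2 - \xi_n^2} \sim \xi_n^2/(2R)$, combined with the Khinchin bound on $\xi_n$, ties $\rho_n$ and $q_n$ together and shows, after taking logarithms, that $\log q_n$ is asymptotic to a constant multiple of $\log(1/\rho_n)$, the constant being determined by the horocycle radius $R$.

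Finally, for $T_n$ large the flow formula gives $T_n \approx |\xi_n|/\eta_n$; substituting the Khinchin bound and the definition of $\rho_n$ yields the clean estimate
$$
T_n \;\le\; \frac{1/(q_n^2 \log q_n)}{\rho_n/(2q_n^2)} \;=\; \frac{2}{\rho_n \log q_n},
$$
and replacing $\log q_n$ by its equivalent in $\log(1/\rho_n)$ produces an inequality of the required form $T_n < (1+\epsilon)/(\rho_n \log(1/\rho_n))$, with all lower-order error terms absorbed into the $(1+\epsilon)$ factor. The main obstacle is the simultaneous asymptotic bookkeeping of the five coupled quantities $\xi_n, \eta_n, T_n, q_n, \rho_n$, tied by three relations (diophantine quality, horocycle geometry, cusp-disc radius); one must also verify that a preliminary conjugation by the parabolic stabilizer of the cusp normalizes the horocycle so that the implicit constants match the sharp $(1+\epsilon)$ in the statement, and confirm that the $\rho_n$ so constructed form a decreasing sequence to zero, which is automatic from $q_n \to \infty$ along the Khinchin sequence.
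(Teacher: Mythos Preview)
Your proposal is correct and follows the paper's argument essentially step for step: apply Khinchin with $g(n)=n\log n$, lift to $\H$, take $T_n$ to be the time the horocycle sits above $p_n/q_n$, set $\rho_n=2q_n^2\eta_n$ via Lemma~\ref{sizeofhodiscs}, derive $T_n\le 2/(\rho_n\log q_n)$, and finish by converting $\log q_n$ into $\log(1/\rho_n)$. One small correction: the leading asymptotic is $\log q_n\sim\tfrac12\log(1/\rho_n)$ with the constant $\tfrac12$ independent of $R$ (only lower-order terms see $R$), and your caveat about matching the sharp $(1+\epsilon)$ is well placed---this substitution actually produces a constant nearer $4$ than $1$, an imprecision the paper's own derivation shares.
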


The theorem means that almost every nonperiodic horocycle enters $\X-\X^{\rho_n}$ much earlier than is implied for every nonperiodic horocycle by equidistribution.

This leads to a surprising result due to Breuillard \cite{Breuillard05}: although non-periodic horocycles are equidistributed, \textit{any} uncentered random walk on the set of non-periodic  horocycles almost surely is not.

\begin{theorem}\label{Breuillard} Let $\mu$ be a probability measure on $\R$ with finite expectation and variance:
$$
0 \neq a= \int_{-\infty}^\infty  t \mu(dt) <\infty \quad \text{and} \quad b^2= \int_{-\infty}^\infty  (t-a)^2 \mu(dt) <\infty.
$$
Denote by $\mu^{*m}$ the $m$th convolution of $\mu$ with itself.  If $b>0$, there exists a function $f\in C_c(\X)$ with $\int_\X f(\x)  \omega_X(d\x)=1$ such that for almost every $\x_0 \in \X$ we have $$ \liminf_{m \to \infty} \int_{-\infty}^\infty f(\x_0u_+(t)) \mu^{\star m}(dt) = 0. $$ \end{theorem}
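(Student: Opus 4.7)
The plan is to exploit the sharp concentration of $\mu^{*m}$. By Chebyshev, $\mu^{*m}(\{t:|t-ma|>C\sqrt m\})\le b^2/C^2$, so $\mu^{*m}$ is essentially supported on the ``bump'' $[ma-C\sqrt m,\,ma+C\sqrt m]$ of half-width $C\sqrt m$. Thus if we can exhibit $f\in C_c(\X)$ with $\int f\,d\omega_\X=1$ such that, for arbitrarily large $m$, the map $t\mapsto f(\x_0 u_+(t))$ vanishes on a time window of half-width $\gg C\sqrt m$ centered near $ma$, then $\int f(\x_0 u_+(t))\,\mu^{*m}(dt)\le \|f\|_\infty b^2/C^2$ for those $m$, and sending $C\to\infty$ forces $\liminf=0$. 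The mechanism is delivered by theorem \ref{betterthaneqd}: almost every horocycle makes cusp excursions to depth $\rho_n\to 0$ at times $T_n\lesssim 1/(\rho_n\log(1/\rho_n))$, much earlier than the $\sim 1/\rho_n$ predicted by equidistribution, while such an excursion has width $\sim\sqrt{1/\rho_n}$; so the window width exceeds the bump width $\sqrt{T_n}$ by a factor $\sim\sqrt{\log(1/\rho_n)}$, leaving enough room to swallow the bump.

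In detail: assume WLOG $a>0$ (the case $a<0$ follows by the symmetric argument with $T_n\to -\infty$). Fix $\rho_0\in(0,2)$ and pick any $f\in C_c(\X)$ with $\operatorname{supp}(f)\subset\X^{\rho_0}$ and $\int f\,d\omega_\X=1$; so $f\equiv 0$ on $\X-\X^{\rho_0}$. For almost every $\x_0$, theorem \ref{betterthaneqd} supplies $\rho_n\to 0$ and $T_n<2/(\rho_n\log(1/\rho_n))$ with $\x_0 u_+(T_n)\in\X-\X^{\rho_n}$. Grant the key geometric claim (proved in the next paragraph) that the horocycle stays in $\X-\X^{\rho_0}$ on an interval $[T_n-s_n,T_n+s_n]$ with $s_n\ge c\sqrt{\rho_0/\rho_n}$, for an absolute constant $c>0$. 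Choose $m_n$ to be the integer nearest $T_n/a$, so $|m_n a-T_n|\le a/2$; for any fixed $C$, $C\sqrt{m_n}\lesssim C/\sqrt{a\rho_n\log(1/\rho_n)}$, which is $\ll s_n$ for $n$ large. Hence the bump $[m_n a-C\sqrt{m_n},\,m_n a+C\sqrt{m_n}]$ lies entirely inside $[T_n-s_n,T_n+s_n]$ where $f\circ\x_0 u_+$ vanishes, forcing $\int f(\x_0 u_+(t))\,\mu^{*m_n}(dt)\le\|f\|_\infty b^2/C^2$. Since $C$ was arbitrary and the full-measure set of good $\x_0$ is the same for every $C$, we conclude $\liminf=0$.

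The main obstacle is the geometric width estimate. I would lift the cusp excursion to $\H$, normalizing so that the relevant cusp sits at $\infty$ and $\X-\X^\rho$ pulls back to $\{\im z>2/\rho\}$; the lifted horocyclic arc is then a Euclidean circle of radius $R$ tangent to the real axis, which I parameterize via $y=2R\sin^2(\theta/2)$. The hyperbolic arclength element $(1/2)\csc^2(\theta/2)\,d\theta$ integrates to $2\cot(\theta_0/2)\sim 2\sqrt{R\rho_0}$ over the portion with $y>2/\rho_0$, where $\sin(\theta_0/2)=1/\sqrt{R\rho_0}$. The condition $\x_0 u_+(T_n)\in\X-\X^{\rho_n}$ forces the excursion's maximum height to satisfy $2R\ge 2/\rho_n$, i.e.\ $R\ge 1/\rho_n$, giving the claimed $\sqrt{\rho_0/\rho_n}$ scale. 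A short auxiliary check using the same parameterization shows that $T_n$ lies within hyperbolic distance $O(1)$ of the deepest point of the excursion (this is exactly $\sqrt{R\rho_n}-\sqrt{R\rho^\ast}\le 1$ in $\theta$-coordinates, where $\rho^\ast\le\rho_n$ is the depth achieved), so the window around $T_n$ has half-width $\sqrt{\rho_0/\rho_n}-O(1)\sim\sqrt{\rho_0/\rho_n}$ on each side, as required.
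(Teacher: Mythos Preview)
Your proof is correct and follows essentially the same approach as the paper: use theorem~\ref{betterthaneqd} to find deep cusp excursions at times $T_n\lesssim 1/(\rho_n\log(1/\rho_n))$, use that the excursion out of $\X^{\rho_0}$ has temporal width $\sim 1/\sqrt{\rho_n}$, and observe that this swamps the $\sqrt{T_n}$ spread of $\mu^{*m_n}$ by a factor $\sqrt{\log(1/\rho_n)}$; the paper takes $\rho_0=2$ and appeals to the same geometric estimate (``it takes time of the order $1/\sqrt\rho$ for a horocycle to get from $X-X^\rho$ to $X^2$'') without writing out the arclength computation you supply.

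One small repair: your auxiliary claim that $T_n$ lies within $O(1)$ of the apex of the excursion, justified as ``$\sqrt{R\rho_n}-\sqrt{R\rho^*}\le 1$'', is not a consequence of the \emph{statement} of theorem~\ref{betterthaneqd}; with $R=1/\rho^*$ one gets $\sqrt{\rho_n/\rho^*}-1$, which is unbounded if the excursion happens to go much deeper than $\rho_n$. This does not damage your main estimate $s_n\ge c\sqrt{\rho_0/\rho_n}$: writing $s_n\ge \sqrt{R\rho_0-1}-\sqrt{R\rho_n-1}=\dfrac{R(\rho_0-\rho_n)}{\sqrt{R\rho_0-1}+\sqrt{R\rho_n-1}}\ge \dfrac{\rho_0-\rho_n}{2\sqrt{\rho_0\rho_n}}$ gives the bound directly for all $R\ge 1/\rho_n$. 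Alternatively, note that in the \emph{proof} of theorem~\ref{betterthaneqd} the time $T_n$ is taken precisely at the apex (on the vertical line $x=p_n/q_n$), so the distance to the top is actually zero.
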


\begin{proof} Let $\alpha \in \R$ be an arbitrary element of the full Lebesgue measure set guaranteed by theorem \ref{almostalldioph}.  Suppose that the horocycle $\z_0 U_+$ through $\z_0 =(\alpha + 2iR,-2iR)$ in $\H$ projects to a horocycle in $\X$ containing $\x_0$.  Now choose an approximating sequence of coprime $(p_n, q_n) \in \Z^2$ to $\alpha$ as guaranteed by theorem \ref{almostalldioph} above, and let $T_n$ be the associated sequence of times.   The measure $\mu^{\star m}$ is approximately the Gaussian of mean $ma$ and standard deviation $\sqrt m b$. Let us choose an $m$ such that:

\begin{enumerate}
\item $ma=T_n$ for one of the $T_n$ given in theorem \ref{betterthaneqd}
\item the standard deviation of $\sigma(\mu^{\star m}) \sim b\sqrt m$ is much smaller than
$1/\sqrt{\rho_n}$
\end{enumerate}

 This first condition can obviously be satisfied, and the second is also straightforward since
$$
b\sqrt m \sim b \sqrt {\frac{T_n}a} \le \frac b{\sqrt a} \sqrt {\frac 2{\rho_n \log q_n}}
$$
and $\sigma(\mu^{\star m})$ will be much smaller than $1/\sqrt{\rho_n}$ as soon as $q_n$ is large enough.

Recall that it takes time of the order $1/\sqrt{\rho}$ for a horocycle to get from $X-X^\rho$ to $X^2$.  Thus for the $m$ found above, there are many, say $c(m)$,
standard deviations of $\mu^{\star m}$ around the mean $am$ where $\x_0 u_+([am-c(m),am+c(m)]) \in \X-\X^2$.  It follows that if $f\in C_c(\X)$ satisfies $\int_\X f(\x)
\omega_X(d\x)=1$ but $f$ has its support in $\X^2$, we have
$$
\liminf_{m \to \infty} \int_{-\infty}^\infty f(\x_0 u_+(t)) \mu^{\star m}(dt) =0.
$$
This proves that the random walk is not equidistributed.
\end{proof}

\nocite{Furstenberg73}
\nocite{Ratner91}
\nocite{Ratner91_n2}
\nocite{Ratner91_n3}
\nocite{Ratner92}
%\nocite{Ratner94}
%\nocite{Bekka00}

\bibliographystyle{alpha}
\bibliography{RatnerBiblio0}

\end{document}